\newtheorem{theorem}{Theorem}[section]
\newtheorem{lemma}[theorem]{Lemma}
\newtheorem{proposition}[theorem]{Proposition}
\newtheorem{corollary}[theorem]{Corollary}
\theoremstyle{definition}
\newtheorem{definition}[theorem]{Definition}
\newtheorem{example}[theorem]{Example}
\newtheorem{remark}[theorem]{Remark}
\def\A{\mathcal{L}}
\def\3{2}
\begin{document}
\title{\textbf{Pseudo--Anosov mapping classes from pure mapping classes}}
\author{Yohsuke Watanabe}
\maketitle
\begin{abstract}
We study types of mapping classes which arise as a product of a given mapping class and powers of certain pure mapping classes. We derive an explicit constant depending only on a surface such that almost all above pure mapping classes give rise to pseudo--Anosov type whenever their powers are larger than the constant. Furthermore, the stable lengths of pseudo--Anosov mapping classes obtained by this method are directly captured from the construction. 
\end{abstract}



\section{Introduction}\label{introduction}
Let $S=S_{g,n}$ be a genus $g$ surface with $n$ punctures. We let $\xi(S_{g,n}):=3g+n-3$ and $\chi(S_{g,n}):=2-2g-n$ denote the complexity and the Euler characteristic of $S$ respectively. In this paper, we focus on $S$ such that $\xi (S)>1$. 
The curve graph $C(S)$ is defined as follows: the vertices are isotopy classes of curves and the edge between two vertices are realized by disjointness. 
We let $d_{S}$ denote the graph metric on $C(S)$ and if $A,B\subseteq C(S)$ then $$d_{S}(A,B):=\max\{d_{S}(a,b)|a \in A,b\in B \}.$$ 
The mapping class group $Mod(S)$ is the group of orientation preserving self--homeomorphisms of $S$ up to isotopy. Nielsen--Thurston classification says that $f\in Mod(S)$ is periodic, reducible or pseudo--Anosov. See \cite{FM} for the definitions and discussions.  
$Mod(S)$ acts naturally on $C(S)$ by an isometry. Let $f\in Mod(S)$, we let $||f||$ denote the stable length of $f$, which is defined by $$||f||:=\liminf_{j\rightarrow \infty}\frac{d_{S}(x,f^{j}(x))}{j}$$ where $x\in C(S)$. If $f$ is periodic or reducible, some vertex of $C(S)$ is fixed by some power of $f$, so $||f||=0$. Masur--Minsky showed that if $f$ is pseudo--Anosov then $||f||>0$ \cite{MM1}. The stable lengths of pseudo--Anosov mapping classes have been studied actively, for instance see the work of Farb--Leininger--Margalit \cite{FarbLeiningerMargalit}, Gadre--Tsai \cite{GadreTsai}, and Valdivia \cite{Valdivia}. We will need a result of Gadre--Tsai: 
\begin{theorem}[\cite{GadreTsai}]\label{GD}
There exists $k(S)\geq \frac{1}{162 \cdot |\chi(S)|^{2}+30 \cdot |\chi(S)|-10n}$ such that for any pseudo--Anosov mapping class $f\in Mod(S)$, any $x\in C(S)$, and any $j \in \mathbb{Z}$, $$d_{S}(x,f^{j}(x))\geq k(S) \cdot |j|.$$ \end{theorem}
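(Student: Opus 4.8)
The plan is to reduce the statement to a uniform lower bound on stable lengths, and then invoke the Gadre--Tsai estimate for that bound.

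\textbf{Step 1: from ``all $j$'' to the stable length.} First I would observe that for a fixed pseudo--Anosov $f$ and vertex $x\in C(S)$, the sequence $a_j:=d_S(x,f^j(x))$ (for $j\ge 0$) is subadditive: since $f$ acts by isometries, $a_{m+j}\le a_m+d_S(f^m(x),f^{m+j}(x))=a_m+a_j$. By Fekete's subadditivity lemma $\lim_{j\to\infty}a_j/j$ exists and equals $\inf_{j\ge 1}a_j/j$, so this limit equals $\|f\|$, and in particular $a_j\ge \|f\|\cdot j$ for every $j\ge 1$. Moreover $\|f\|$ is independent of the chosen basepoint, because $|d_S(x,f^j(x))-d_S(y,f^j(y))|\le 2\,d_S(x,y)$ forces the normalized sequences for different basepoints to have the same limit; and $d_S(x,f^{-m}(x))=d_S(f^m(x),x)=a_m$ handles negative exponents. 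Thus $d_S(x,f^j(x))\ge \|f\|\cdot|j|$ for every $j\in\mathbb Z$ and every $x\in C(S)$, and it suffices to exhibit a single constant $k(S)>0$ with $\|f\|\ge k(S)$ for every pseudo--Anosov $f\in Mod(S)$.

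\textbf{Step 2: a uniform lower bound on $\|f\|$.} For this I would cite Gadre--Tsai \cite{GadreTsai}. While Masur--Minsky \cite{MM1} already give $\|f\|>0$ for each individual $f$, Gadre--Tsai prove the stronger fact that $\inf_f \|f\|$, over all pseudo--Anosov $f\in Mod(S)$, is bounded below by the reciprocal of an explicit polynomial in the topological type of $S$, namely $\frac{1}{162\cdot|\chi(S)|^2+30\cdot|\chi(S)|-10n}$. Their argument controls how subsurface projections can vary along a quasi--axis of $f$ in $C(S)$ — via the Bounded Geodesic Image theorem together with the hierarchy / distance--formula machinery — and thereby forces $d_S(x,f^j(x))$ to grow at least at this rate. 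Taking $k(S):=\inf_f\|f\|$ (or any lower bound for it of the displayed form) and combining with Step~1 yields the theorem.

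The genuinely hard part, were one to seek a self--contained proof, lies entirely in Step~2: showing that the subsurface projections along the axis cannot conspire to keep the curve--graph distance small is the technical heart of \cite{GadreTsai} and would require importing the full Masur--Minsky hierarchy apparatus. From the standpoint of the present paper, Step~2 is a black box and the only content to verify is that the constants match up; Step~1 is soft, relying only on subadditivity and basepoint--independence of the stable length.
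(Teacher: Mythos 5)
The paper gives no proof of this theorem — it is stated as a direct citation of Gadre--Tsai, so there is no internal argument to compare against. Your proposal is a correct and reasonable elaboration of why the citation yields the stated form: Step~1 (subadditivity of $a_j=d_S(x,f^j(x))$ via the isometric action, Fekete's lemma giving $\|f\|=\inf_j a_j/j$, basepoint--independence from $|d_S(x,f^j(x))-d_S(y,f^j(y))|\le 2\,d_S(x,y)$, and the symmetry $d_S(x,f^{-m}(x))=d_S(f^m(x),x)$ to handle $j<0$) is the standard soft reduction from a uniform lower bound on the asymptotic translation length to the pointwise inequality $d_S(x,f^j(x))\ge k(S)|j|$; Step~2 is exactly the black--box invocation of the Gadre--Tsai bound on $\inf_f\|f\|$, which is the intended reading of the citation. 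Nothing is missing and no step fails.
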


Pseudo--Anosov mapping classes are difficult to obtain directly from their definition. The aim of this paper is to provide a method to generate them by using known technologies developed in the study of mapping class groups, the curve graphs, and the interplay between them via group actions. 
\subsection{Related results and questions}\label{historyupto2016}
We review some known constructions of pseudo--Anosov mapping classes. Recall $A\subseteq C(S)$ is said to fill $S$ if every complementary component of $A$ in $S$ is a disk or a peripheral annulus. Note that $a,b\in C(S)$ fill $S$ if and only if $d_{S}(a,b)>2$.
\begin{itemize}
\item In \cite{THU}, Thurston showed that if $a,b\in C(S)$ such that they together fill $S$, then $t_{a}^{p}\circ t_{b}^{-q}$ is pseudo--Anosov for all $p,q\in \mathbb{N}_{>0}$. This result was generalized to the cases where more than two Dehn twists are involved by Long \cite{longmulti} and then by Penner \cite{Penner}.

\item In \cite{LM}, Long--Morton showed that if $f\in Mod(S)$ is pseudo--Anosov and $t_{a}$ is Dehn twist along $a\in C(S)$ then $t_{a}^{j}\circ f$ is pseudo--Anosov except for finitely many values of $j$. 

\item In \cite{Fathi}, Fathi effectivised the above result of Long--Mortan. He showed that if $f\in Mod(S)$ and $a\in C(S)$ such that $\{f^{n}(a)|  n \in \mathbb{Z}\}$ fills $S$, then $\{t_{a}^{j} \circ f | j\in \mathbb{Z}\}$ are pseudo--Anosov except for at most 7 consecutive values of $j$.

Note that Fathi's theorem generalizes Thurston's theorem because if $a,b\in C(S)$ are filling curves then $\{t_{b}^{n}(a)| n\in \mathbb{Z}\}$ fills $S$, and Fathi's theorem applies. Fathi's 7 was improved to be 6 by Boyer--Gordon--Zhang in \cite{BGZ}.


\item In \cite{POFI}, Ivanov asked the following question, which is a Long's and Penner's like extension on Fathi's theorem: is there a constant $N_{S}$, depending only on $S$, such that the following holds? Let $f\in Mod(S)$ and let $m=t_{a_{1}}^{\pm 1}\circ t_{a_{2}}^{\pm 1} \circ \cdots \circ t_{a_{k}}^{\pm 1}$ be a multitwist. If $\{f^{n}(a_{i})|1\leq i\leq k,n\in \mathbb{Z}\}$ fills $S$, then $\{m^{j} \circ f | j\in \mathbb{Z}\}$ are pseudo--Anosov except for at most $N_{S}$ consecutive values of $j$.    
\end{itemize}

\subsection{Results}\label{alowei}
In this paper, we also study the construction of pseudo--Anosov mapping classes obtained by a product of two mapping classes. Our construction is closer to that of Long--Morton and Fathi: roughly speaking, we will generate pseudo--Anosov mapping classes by multiplying pure mapping classes to a given mapping class. We recall the definition of pure mapping classes; first, recall that if $f$ is reducible then there exists $i$ such that $f^{i}$ behaves as a power of Dehn twist in each component of the regular neighborhoods of some multicurve and behaves as a pseudo--Anosov or the identity map in each complementary component of the neighborhoods. Pure mapping classes are the mapping classes that admit the above decomposition when $i=1$. They consist of the identity map, pseudo--Anosov maps, partial pseudo--Anosov maps, Dehn twists, and multitwists. Note that the mapping class groups contain ``many'' pure mapping classes; Ivanov showed that the level $p$ congruence subgroup, which is a ``finite index'' subgroup of $Mod(S)$, consists of pure mapping classes if $p>2$ \cite{Ivanov}. We let $PMod(S)$ denote the set of all pure mapping classes of $Mod(S).$

We define the following which is necessary to state the main theorem thereafter. (We refer the reader to $\S \ref{shimokita}$ for notations used in Definition \ref{LATER}.)
\begin{definition}\label{LATER}
\begin{itemize}
\item Let $a \in C(S)$. We define $\mathcal{Z}_{a}:=\{Z\subseteq S| \pi_{Z}(a)=\emptyset\}$. Note that $\mathcal{Z}_{a}$ consists of the annulus whose core curve is $a$ and all subsurfaces contained in the complement of $a$.
\item Let $\phi \in PMod(S)$. We call $Z\subseteq S$ a loxodromic domain of $\phi$ if $\phi(Z)=Z$ and there exists $k>0$ such that for any $x\in C(S)$ such that $\pi_{Z}(x)\neq \emptyset$ and any $j\in \mathbb{Z}$, $$d_{Z}(x,\phi^{j}(x))\geq k\cdot |j|.$$ Furthermore, we let $\A(\phi)$ denote the set of all loxodromic domains of $\phi.$ For example,
\begin{itemize}
\item If $f$ is a partial pseudo--Anosov map supported on $Z\subseteq S$ then $\A(f)= \{Z\}$, see Lemma \ref{min}. 
\item Let $a \in C(S)$. If $t_{a}$ is Dehn twist along $a$ then $\A(t_{a}) = \{\text{The annulus whose core curve is }a \}$ and if $m_{a}$ is a multitwist which contains $t_{a}$ then $\A(m_{a})\ni \{\text{The annulus whose core curve is }a \}.$ For both cases, $k$ can be taken to be $1$, see Lemma \ref{dehnmin} and Lemma \ref{multimin}.
\end{itemize}
\item Let $a \in C(S)$. We define $\Phi_{a}:=\{\phi \in PMod(S)| \phi(a)=a \text{ and } \A(\phi)\neq \emptyset\}$. Note that 
\begin{itemize}
\item $\Phi_{a}$ consists of non--identity pure mapping classes which fix $a$.
\item If $\phi \in \Phi_{a}$ then $\A(\phi)\subseteq \mathcal{Z}_{a}$.
\end{itemize}
\end{itemize}
\end{definition}

Now, we state the main theorem of the paper: throughout this section, we let $M\leq 100$ and $k(S)\geq \frac{1}{162 \cdot |\chi(S)|^{2}+30 \cdot |\chi(S)|-10n}$ denote the constants given by Theorem \ref{BGIT} and by Corollary \ref{gru} respectively. 
\begin{theorem}\label{almostall}
Let $f\in Mod(S)$ and $a \in C(S)$ such that $d_{S}(a,f(a))>4$. There exists $\mathcal{W} \subseteq \mathcal{Z}_{a}$ with $|\mathcal{W}|<\infty$ such that the following holds: if $\phi \in \Phi_{a}$ such that $\A(\phi) \cap \{\mathcal{Z}_{a} \setminus \mathcal{W} \}\neq \emptyset$, then $\phi^{j}\circ  f$ is pseudo--Anosov whenever $|j|> \frac{M+14}{k(S)}$.
\end{theorem}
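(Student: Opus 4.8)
The plan is to show that for $g:=\phi^{j}\circ f$, when $|j|>\frac{M+14}{k(S)}$, the orbit $\{g^{N}(a)\}_{N\in\mathbb{Z}}$ is an unbounded --- indeed quasi--geodesic --- sequence in $C(S)$. A mapping class with an unbounded orbit in $C(S)$ is pseudo--Anosov: by the Nielsen--Thurston trichotomy \cite{FM} a periodic or reducible mapping class has a power fixing a curve or a proper subsurface and hence keeps every orbit within bounded distance of a finite set, while a pseudo--Anosov map has positive stable length \cite{MM1}; so it suffices to prove the orbit unbounded, and its linear growth rate will moreover read off the lower bound on $\|g\|$ advertised in the abstract. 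Write $b:=f(a)$ and $c_{N}:=g^{N}(a)$. Since $\phi(a)=a$ we have $c_{1}=\phi^{j}(b)$ and $c_{-1}=f^{-1}(a)$, and since $g$ is an isometry, $d_{S}(c_{N},c_{N+1})=d_{S}(a,b)>4$ for every $N$. Fix a domain $Z\in\A(\phi)\cap(\mathcal{Z}_{a}\setminus\mathcal{W})$: then $\phi(Z)=Z$, $Z$ is a loxodromic domain of $\phi$ with constant at least $k(S)$ by Corollary~\ref{gru}, and $\pi_{Z}(a)=\emptyset$.

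First I would pin down $\mathcal{W}$. Since $\pi_{Z}(a)=\emptyset$, every component of $\partial Z$ lies within distance $1$ of $a$; as $d_{S}(a,f(a))>4$ this forces $f(a)$ and $f^{-1}(a)$ to meet $Z$ essentially, for otherwise $f^{-1}(a)$ and $a$ would both be within distance $1$ of a component of $\partial Z$, giving $d_{S}(f^{-1}(a),a)\le 3$. Thus $\pi_{Z}(b)$, $\pi_{Z}(f(a))$ and $\pi_{Z}(f^{-1}(a))$ are all nonempty for every $Z\in\mathcal{Z}_{a}$. I then define
$$\mathcal{W}:=\bigl\{\,Z\in\mathcal{Z}_{a}\ :\ d_{Z}(f^{-1}(a),f(a))>C_{0}\,\bigr\}$$
for a suitable universal constant $C_{0}$; by the standard finiteness property of subsurface projections, applied to the pair $\{f^{-1}(a),f(a)\}$, the set $\mathcal{W}$ is finite. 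Now put $Y_{N}:=g^{N}(Z)$, so $g(Y_{N})=Y_{N+1}$ and, since $a$ is disjoint from $Z$, $\partial Y_{N}=g^{N}(\partial Z)$ lies within distance $1$ of $c_{N}$. The key estimate is for $d_{Y_{N}}(c_{N-1},c_{N+1})$, which by $g$--equivariance equals $d_{Z}(c_{-1},c_{1})=d_{Z}(f^{-1}(a),\phi^{j}(b))$. Applying the coarse triangle inequality for $\pi_{Z}$, together with $d_{Z}(b,\phi^{j}(b))\ge k(S)|j|$ (the loxodromic--domain inequality, valid because $\phi(Z)=Z$ and $\pi_{Z}(b)\neq\emptyset$) and $d_{Z}(f^{-1}(a),f(a))\le C_{0}$ (exactly because $Z\notin\mathcal{W}$), one obtains $d_{Y_{N}}(c_{N-1},c_{N+1})\ge k(S)|j|-C_{0}-O(1)$ for every $N\in\mathbb{Z}$; absorbing the additive errors into the constant $14$, this exceeds $M$ precisely when $|j|>\frac{M+14}{k(S)}$.

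Finally I would feed into the local--to--global machinery for the curve graph the two facts just assembled: (i) $d_{S}(c_{N},c_{N+1})>4$ for all $N$, which forces consecutive domains $Y_{N}$ and $Y_{N+1}$ --- whose boundaries lie within distance $1$ of $c_{N}$ and $c_{N+1}$ --- to overlap (they can be neither equal, nor nested, nor disjoint) and makes the geodesic segments long; and (ii) $d_{Y_{N}}(c_{N-1},c_{N+1})>M$ for all $N$. By the bounded geodesic image theorem (Theorem~\ref{BGIT}), (ii) makes every geodesic $[c_{N-1},c_{N+1}]$ pass through the $1$--neighbourhood of $\partial Y_{N}$, hence the $2$--neighbourhood of $c_{N}$, so each $c_{N}$ is a genuine pivot of the bi--infinite concatenation $\cdots\ast[c_{-1},c_{0}]\ast[c_{0},c_{1}]\ast\cdots$; combined with (i) and the Behrstock inequality (which time--orders the family $\{Y_{N}\}$ and bounds all non--consecutive interactions among them by a constant depending only on $S$), this shows the concatenation is a quasi--geodesic with constants depending only on $S$. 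In particular $d_{S}(a,g^{N}(a))\to\infty$ at a linear rate, so no power of $g$ fixes a curve or a proper subsurface, and $g$ is pseudo--Anosov.

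The step I expect to be the main obstacle is the last one: making the local--to--global passage uniform and quantitative --- verifying the overlap and anti--backtracking conditions for the entire family $\{Y_{N}\}_{N\in\mathbb{Z}}$ with constants independent of $\phi$ and $j$, and carrying out the bookkeeping that collapses the slack in Theorem~\ref{BGIT}, the Lipschitz constants of the projections $\pi_{Z}$, and the threshold $C_{0}$ defining $\mathcal{W}$ into the single constant $14$. A secondary point needing care is the uniformity of the loxodromic constant --- that every $Z\in\A(\phi)$ carries the same lower bound $k(S)$ --- which is exactly the content of Corollary~\ref{gru}, built on the Gadre--Tsai estimate (Theorem~\ref{GD}) for partial pseudo--Anosov supports and on the explicit computations for Dehn twists and multitwists (Lemmas~\ref{min}, \ref{dehnmin}, \ref{multimin}).
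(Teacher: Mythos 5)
Your proposal follows the same strategy as the paper and correctly identifies all the ingredients: the positive stable length criterion, the definition of $\mathcal{W}$ via the finiteness theorem for large subsurface projections (the paper takes $\mathcal{W}=\{W\in\mathcal{Z}_a:d_W(f^{-1}(a),f(a))>3\}$, i.e.\ your $C_0=3$), the $g$--equivariance bookkeeping that converts $d_{Y_N}(c_{N-1},c_{N+1})$ into $d_Z(c_{-1},c_1)$, the coarse triangle inequality together with Corollary~\ref{gru} to produce $d_{Y_N}(c_{N-1},c_{N+1})\geq k(S)|j|-3$, and the intended use of Theorem~\ref{BGIT} and Theorem~\ref{BEH} in the final step. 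The paper actually works with $g=f\circ\phi^{j}$ and then remarks that the family $\{\phi^{j}\circ f\}$ follows by conjugating the orbit by $\phi^{j}$; your direct treatment of $\phi^{j}\circ f$ is an equivalent reformulation.

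The genuine gap is exactly where you flag it: the passage from the local large--projection data to the global lower bound on $d_{S}(a,g^{N}(a))$. You assert that BGIT plus a Behrstock time--ordering of the family $\{Y_N\}$ yields a quasi--geodesic ``by local--to--global machinery,'' but no such machinery is cited or proved, and the paper does not rely on one. Instead, the paper supplies an explicit induction (Propositions~\ref{pi3} and~\ref{i3}) whose content is precisely your missing step: it fixes the \emph{single} domain $Z$, shows inductively that $d_Z(g^{2}(a),g^{i}(a))\leq 11$ for all $i\geq 2$ --- the Behrstock inequality is used at the inductive step to transfer a bound from $g(Z)$ back to $Z$ via the equivariance $d_{g(Z)}(g(a),g^{k+1}(a))=d_Z(a,g^{k}(a))$ --- so that $d_Z(a,g^{i}(a))>M$ persists, and then BGIT gives the additive lower bound $d_S(a,g^{i}(a))\geq i\cdot(d_S(a,g(a))-4)+4$. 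Your family--of--domains formulation $\{Y_N\}$ is equivalent in spirit, but to make it rigorous one still has to carry out an induction of this kind (or invoke an external local--to--global theorem with explicit constants, which would change the character of the paper's argument and would not automatically reproduce the threshold $M+14$). A secondary imprecision: your reduction claims it suffices for $d_{Y_N}(c_{N-1},c_{N+1})$ to exceed $M$, but the induction in fact needs the initial projection $d_Z(a,g^{2}(a))>M+11$ (the extra $+11$ is the slack consumed by Lemma~\ref{smallprojection} and Proposition~\ref{pi3} through the induction); this is what produces $M+11+3=M+14$ once $C_0=3$ is fixed, so the ``absorb into $14$'' bookkeeping needs to be tied down.
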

We state some remarks regarding the above theorem:
\begin{remark}\label{aboutthemt}
First, we state some comparison with the results of Long--Morton and Fathi from $\S\ref{historyupto2016}$:
even though a given mapping class, denoted $f$ in Theorem \ref{almostall}, does not have to be pseudo--Anosov, the hypothesis of $f$ is stronger than that of Fathi. Indeed, Fathi only requires that $d_{S}(a,f^{n}(a))>2$ for some $n$ so that $\{f^{n}(a)|  n \in \mathbb{Z}\}$ fills $S$.
While Long--Morton and Fathi assume a surface, denoted $S$ in Theorem \ref{almostall}, to be closed, our proof applies to a general surface.
Unlike with Fathi's uniform constant, 7, our constant depends on a surface, but our method captures the position of the mapping classes that are not pseudo--Anosov when we vary $j$ in Theorem \ref{longmorton}, Theorem \ref{fathi} and Theorem \ref{almostall} over $\mathbb{Z}$. 
While Long--Morton and Fathi focus on a specific pure mapping class, $t_{a}$, we consider all non--identity pure mapping classes which fix $a$, $\Phi_{a}$. Clearly, $t_{a}\in \Phi_{a}$. Theorem \ref{almostall} says that almost all elements of $\Phi_{a}$ give rise to pseudo--Anosov type as far as they are raised to the $j$th power where $|j|> \frac{M+14}{k(S)}$. Capturing this phenomena for a large family of $\Phi_{a}$ is the main contribution of this paper. However, we can still apply Theorem \ref{almostall} to specific families of $\Phi_{a}$; for example, Dehn twists and multitwists. Since $|\mathcal{W}|<\infty$, there are only finitely many Dehn twists and multitwists in $\Phi_{a}$ such that the statement of Theorem \ref{almostall} does not hold for. 
Lastly, we note that exceptional elements of $\Phi_{a}$, which the statement of Theorem \ref{almostall} does not hold for, are detectable since $\mathcal{W}$ can be explicit by the proof of Theorem \ref{almostall}; $\mathcal{W}=\{W\subseteq \mathcal{Z}_{a}| d_{W}(f^{-1}(a),f(a))>3\}.$ We can further analyze the elements of $\mathcal{W}$; by the proof of Theorem \ref{BBF}, which we refer the reader to \cite{BBF}, if $W\in \mathcal{W}$ then we must have $i(f^{}(a), \partial(W)) \leq 2\cdot i(f^{-1}(a), f(a))$ and $i(f^{-1}(a), \partial(W))\leq 2\cdot i(f^{-1}(a), f(a)).$ 
\end{remark}

The proof of the main theorem given in $\S\ref{tl2}$ gives rise to the results which follow.

Since the description of $\mathcal{W}$ in Theorem \ref{almostall} is explicit as in Remark \ref{aboutthemt}, we can pick a pure mapping class in $\Phi_{a}$ so that the statement of Theorem \ref{almostall} holds for it:
\begin{corollary}\label{ccc1}
Let $f\in Mod(S)$ and $a \in C(S)$ such that $d_{S}(a,f(a))>4$. If $\phi \in \Phi_{a}$ such that $i(f^{}(a), \partial(Y)) >2\cdot i(f^{-1}(a), f(a))$ or $i(f^{-1}(a), \partial(Y))>2\cdot i(f^{-1}(a), f(a))$ for some $Y\in \A(\phi)$, then $\phi^{j}\circ  f$ is pseudo--Anosov whenever $|j|> \frac{M+14}{k(S)}$.
\end{corollary}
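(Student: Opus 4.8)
The plan is to derive Corollary \ref{ccc1} directly from Theorem \ref{almostall} together with the explicit description of $\mathcal{W}$ supplied in Remark \ref{aboutthemt}. Recall that Theorem \ref{almostall} produces a finite set $\mathcal{W} \subseteq \mathcal{Z}_{a}$ and asserts that $\phi^{j}\circ f$ is pseudo--Anosov for $|j| > \frac{M+14}{k(S)}$ provided $\A(\phi)$ meets $\mathcal{Z}_{a}\setminus\mathcal{W}$. The content of Remark \ref{aboutthemt}, which appeals to the proof of Theorem \ref{almostall} and to the proof of Theorem \ref{BBF}, is the double characterization
\[
\mathcal{W} = \{W\subseteq \mathcal{Z}_{a} \mid d_{W}(f^{-1}(a),f(a)) > 3\} \subseteq \{W \mid i(f(a),\partial(W)) \leq 2\cdot i(f^{-1}(a),f(a)) \text{ and } i(f^{-1}(a),\partial(W)) \leq 2\cdot i(f^{-1}(a),f(a))\}.
\]
So the whole task is a contrapositive bookkeeping step: if some $Y \in \A(\phi)$ violates at least one of the two intersection-number bounds, then $Y \notin \mathcal{W}$ by the displayed containment, hence $Y \in \mathcal{Z}_{a}\setminus\mathcal{W}$, and since $\A(\phi) \subseteq \mathcal{Z}_{a}$ (this is the last bullet of Definition \ref{LATER}, using $\phi \in \Phi_{a}$), we get $\A(\phi) \cap \{\mathcal{Z}_{a}\setminus\mathcal{W}\} \ni Y \neq \emptyset$. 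Theorem \ref{almostall} then applies verbatim to give the conclusion.

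Concretely I would carry out the steps in this order. First, fix $\phi \in \Phi_{a}$ and $Y \in \A(\phi)$ with $i(f(a),\partial(Y)) > 2\cdot i(f^{-1}(a),f(a))$ or $i(f^{-1}(a),\partial(Y)) > 2\cdot i(f^{-1}(a),f(a))$. Second, invoke the containment from Remark \ref{aboutthemt}: every element of $\mathcal{W}$ satisfies both intersection-number bounds, so $Y$ — which fails at least one of them — cannot lie in $\mathcal{W}$. (One should note that $Y \in \mathcal{Z}_{a}$, which holds because $\A(\phi)\subseteq \mathcal{Z}_{a}$ for $\phi \in \Phi_{a}$, so it even makes sense to ask whether $Y\in\mathcal{W}$.) Third, conclude $Y \in \mathcal{Z}_{a}\setminus\mathcal{W}$, hence $\A(\phi)\cap\{\mathcal{Z}_{a}\setminus\mathcal{W}\} \neq \emptyset$. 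Fourth, apply Theorem \ref{almostall} to conclude $\phi^{j}\circ f$ is pseudo--Anosov for $|j| > \frac{M+14}{k(S)}$.

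The main — really the only — subtlety is justifying the use of the explicit form of $\mathcal{W}$: the corollary is trivial once one grants the description $\mathcal{W} = \{W\subseteq \mathcal{Z}_{a} \mid d_{W}(f^{-1}(a),f(a)) > 3\}$ and the intersection-number estimate, but both of those are asserted in Remark \ref{aboutthemt} rather than proved in the main text. In a self-contained write-up I would either (i) postpone the proof of this corollary to $\S\ref{tl2}$ so that it can cite the relevant lines of the proof of Theorem \ref{almostall} directly, or (ii) restate the needed facts as a separate lemma extracted from that proof together with a pointer to the Behrstock--Bestvina--Bromberg machinery of \cite{BBF} for the bound $i(f^{\pm 1}(a),\partial(W)) \leq 2\cdot i(f^{-1}(a),f(a))$ on domains $W$ with large $d_{W}(f^{-1}(a),f(a))$. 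Either way, no new geometric input is required beyond what already appears; the corollary is a packaging of Theorem \ref{almostall} with an explicit, checkable sufficient condition for a loxodromic domain of $\phi$ to avoid the bad set $\mathcal{W}$.
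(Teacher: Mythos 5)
Your proposal is correct and follows essentially the same path as the paper's proof (stated there as Corollary \ref{cccc1}): identify that the Bestvina--Bromberg--Fujiwara intersection-number bound forces any $Y$ violating one of the two inequalities to satisfy $d_{Y}(f^{-1}(a),f(a))\leq 3$, hence to lie outside $\mathcal{W}$, and then conclude via Theorem \ref{almostall}. The paper phrases the final step by feeding $d_{Y}(f^{-1}(a),f(a))\leq 3$ directly into Lemma \ref{actual} and Proposition \ref{i3}, but this is just the unpacking of the same application of Theorem \ref{almostall} that you make.
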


A pure mapping class $\phi$ in Corollary \ref{ccc1} can be found easily. See Remark \ref{dry1}.

We can add an extra assumption to Theorem \ref{almostall} so that the statement of Theorem \ref{almostall} holds for every element of $\Phi_{a}$:
\begin{corollary}\label{ccc2}
Let $f\in Mod(S)$ and $a \in C(S)$ such that $d_{S}(a,f(a))>4$ and that there exists a geodesic between $f^{-1}(a)$ and $f(a)$ whose $1$--neighborhood does not intersect with $\{\partial(Z)|Z\in \mathcal{Z}_{a}\}$ in $C(S)$. If $\phi \in \Phi_{a}$, then $\phi^{j}\circ  f$ is pseudo--Anosov whenever $|j|> \frac{2\cdot M+11}{k(S)}.$ 
\end{corollary}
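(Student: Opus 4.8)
The plan is to reduce Corollary \ref{ccc2} to the main theorem by re-examining the role of the exceptional set $\mathcal{W}$. Recall from Remark \ref{aboutthemt} that the set produced in the proof of Theorem \ref{almostall} is $\mathcal{W}=\{W\subseteq \mathcal{Z}_{a}\mid d_{W}(f^{-1}(a),f(a))>3\}$. The extra hypothesis here — that some geodesic $\gamma$ from $f^{-1}(a)$ to $f(a)$ has its $1$--neighborhood disjoint from $\{\partial(Z)\mid Z\in\mathcal{Z}_{a}\}$ — should force $\mathcal{W}=\emptyset$. First I would invoke the Bounded Geodesic Image theorem (Theorem \ref{BBF}, the $M\le 100$ constant cited in the excerpt, or rather the version with constant governed by $M$): if $Z\in\mathcal{Z}_{a}$ and the geodesic $\gamma$ from $f^{-1}(a)$ to $f(a)$ stays at distance $\ge 2$ from $\partial Z$ (equivalently, no vertex of $\gamma$ is within distance $1$ of $\partial Z$, so $\partial Z$ has nonempty projection to every vertex and the whole geodesic projects), then $\mathrm{diam}_{Z}(\pi_{Z}(f^{-1}(a))\cup\pi_{Z}(f(a)))\le M$. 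But that is too weak; what I actually want is $d_{Z}(f^{-1}(a),f(a))\le 3$. The cleaner route is: the hypothesis says $\gamma$ and its $1$--neighborhood miss $\partial Z$, so in particular $\partial Z$ is not a vertex of $\gamma$ and is not adjacent to any vertex of $\gamma$; then every vertex $v$ of $\gamma$ has $d_{S}(v,\partial Z)\ge 2$, hence $\pi_{Z}(v)\neq\emptyset$, and moreover — this is the key local computation — consecutive vertices $v,v'$ of $\gamma$ are disjoint and both have defined projections to $Z$, so $d_{Z}(v,v')\le 2$ by the standard projection estimate, and since $\pi_Z$ of adjacent curves that both intersect $Z$ actually differ by at most $2$ in $d_Z$... wait, I need the sharper fact that disjoint curves project to within distance at most $2$ (often stated as $\le 2$, sometimes $\le 3$ or $4$ depending on conventions). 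In any case, a geodesic of length $\ell=d_{S}(f^{-1}(a),f(a))$ all of whose vertices project into $Z$ does \emph{not} a priori give a bound on $d_{Z}$ of its endpoints; that's false in general. So the real mechanism must be different.

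Let me restart the core idea. The right statement is: if the $1$--neighborhood of $\gamma$ misses $\partial Z$ then $d_Z(f^{-1}(a),f(a))\le 3$. This is exactly the hypothesis-form of Bounded Geodesic Image with constant taken to be $3$ — but BGI gives $M\le 100$, not $3$. So the hypothesis of Corollary \ref{ccc2} is \emph{not} literally making $\mathcal{W}$ empty via BGI with the stated constants; rather, I suspect the intended argument is combinatorial and uses that a curve $c=\partial Z$ disjoint from $a$ which has large projection to its own associated subsurface must have a nearby vertex on \emph{every} geodesic between two points that see $Z$ — this is precisely the contrapositive of BGI, and the constant $3$ there is legitimate only if one uses a version of BGI with small multiplicative/additive constants on the \emph{curve graph} (Webb's bound, or the sharp $M=3$ or $M=4$ results for the curve complex specifically rather than general hierarchically hyperbolic spaces). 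So the step I would actually carry out: cite the sharp Bounded Geodesic Image constant for curve graphs (e.g.\ the value $3$ or $4$ available in the literature), conclude that if $d_{Z}(f^{-1}(a),f(a))>3$ then every geodesic from $f^{-1}(a)$ to $f(a)$ passes within distance $1$ of $\partial Z$, contrapose to get that under our hypothesis no $Z\in\mathcal{Z}_{a}$ lies in $\mathcal{W}$, i.e.\ $\mathcal{W}=\emptyset$.

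With $\mathcal{W}=\emptyset$, the hypothesis ``$\A(\phi)\cap\{\mathcal{Z}_{a}\setminus\mathcal{W}\}\neq\emptyset$'' in Theorem \ref{almostall} becomes simply ``$\A(\phi)\cap\mathcal{Z}_{a}\neq\emptyset$'', which holds for \emph{every} $\phi\in\Phi_{a}$ since by Definition \ref{LATER} we have $\emptyset\neq\A(\phi)\subseteq\mathcal{Z}_{a}$. So Theorem \ref{almostall} applies to all $\phi\in\Phi_{a}$ and gives that $\phi^{j}\circ f$ is pseudo--Anosov whenever $|j|>\frac{M+14}{k(S)}$. It remains to explain the bound $\frac{2M+11}{k(S)}$ rather than $\frac{M+14}{k(S)}$ in the statement: I expect the numerology shifts because, without the exceptional set, the proof of Theorem \ref{almostall} can be re-run tracking the distance estimate in $C(S)$ more directly — one replaces an application of BGI that previously ``cost'' one $M$ and absorbed some slack into $\mathcal{W}$ by instead applying BGI twice (once at each of $f^{-1}(a)$ and $f(a)$, or to both halves of the orbit segment), producing a $2M$ term, while the additive piece improves from $14$ to $11$ because the filling/distance bookkeeping no longer needs the extra room that defining $\mathcal{W}$ required. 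Concretely I would: (i) fix $x\in C(S)$ with $\pi_{Y}(x)\neq\emptyset$ for $Y\in\A(\phi)\subseteq\mathcal{Z}_a$; (ii) lower-bound $d_{S}(x,(\phi^{j}\circ f)^{m}(x))$ along the orbit using the loxodromic-domain inequality $d_{Y}(\cdot,\phi^{j}(\cdot))\ge k|j|$ together with a BGI/projection argument that converts large subsurface projection in $Y$ into progress in $C(S)$, where the two BGI applications contribute the $2M$; (iii) conclude the stable length $\|\phi^{j}\circ f\|>0$ once $k(S)|j|>2M+11$, hence pseudo--Anosov by Masur--Minsky.

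The main obstacle I anticipate is pinning down exactly which Bounded Geodesic Image constant is being invoked and with what convention, since the passage from ``$1$--neighborhood of a geodesic misses $\partial Z$'' to ``$d_Z$ of endpoints $\le 3$'' is only true for the sharp curve-graph version of BGI and not for the $M\le 100$ bound quoted for general subsurface projections; reconciling the constant $3$ in the description of $\mathcal{W}$ with the constant $M$ elsewhere, and then correctly propagating both through the stable-length estimate to land on $2M+11$, is the delicate part. The rest — that $\mathcal{W}=\emptyset$ trivializes the hypothesis of Theorem \ref{almostall}, and that $\A(\phi)\subseteq\mathcal{Z}_a$ is always nonempty for $\phi\in\Phi_a$ — is immediate from the definitions.
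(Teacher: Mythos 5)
There is a genuine gap, though you correctly diagnose the symptom before misdiagnosing the cure. You are right that the hypothesis does \emph{not} force $\mathcal{W}=\emptyset$: BGI with constant $M\le 100$ only gives $d_{Z}(f^{-1}(a),f(a))\le M$, not $\le 3$, and there is no sharper curve-graph BGI being invoked anywhere in this paper. Your subsequent attempt to salvage the route through $\mathcal{W}=\emptyset$ and Theorem~\ref{almostall} is therefore not what the argument does, and the self-acknowledged mismatch you flag — that route would yield the threshold $\frac{M+14}{k(S)}$, not $\frac{2M+11}{k(S)}$ — is the telltale sign that the route is wrong. Your speculation about ``applying BGI twice'' to produce a $2M$ term is also not how the constant arises.

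The intended argument bypasses $\mathcal{W}$ and Theorem~\ref{almostall} entirely and goes directly through Lemma~\ref{actual}. The hypothesis says some geodesic $\gamma$ from $f^{-1}(a)$ to $f(a)$ has $1$--neighborhood disjoint from $\{\partial(Z)\mid Z\in\mathcal{Z}_a\}$; hence every vertex of $\gamma$ lies at distance $\ge 2$ from every such $\partial(Z)$, so every vertex of $\gamma$ projects nontrivially to every $Z\in\mathcal{Z}_a$. Theorem~\ref{BGIT} then gives the uniform bound $d_{Z}(f^{-1}(a),f(a))\le M$ for \emph{all} $Z\in\mathcal{Z}_a$, not merely $\le 3$ on the complement of some finite exceptional set. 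Since $\phi\in\Phi_a$ forces $\A(\phi)\subseteq\mathcal{Z}_a$, every $Y\in\A(\phi)$ satisfies $d_{Y}(f^{-1}(a),f(a))\le M$. Now Lemma~\ref{actual} says $g=f\circ\phi^{j}$ satisfies the hypothesis of Proposition~\ref{i3} once $|j|>\frac{M+11+d_{Y}(f^{-1}(a),f(a))}{k(S)}$; substituting the bound $M$ gives precisely $|j|>\frac{2M+11}{k(S)}$, and Proposition~\ref{i3} then shows $\|g\|>0$, so $g$ (and hence $\phi^{j}\circ f$, by the conjugation remark after Theorem~\ref{whatweprove}) is pseudo--Anosov. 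In short: the exceptional set $\mathcal{W}$ is a device for the \emph{general} Theorem~\ref{almostall}, where one must exclude the finitely many $Y$ with large projection; under the extra geodesic hypothesis one instead obtains a weaker but \emph{uniform} bound $\le M$ for all $Y\in\A(\phi)$, which applies to every $\phi$ at the modest cost of replacing the $+3$ by $+M$ in the threshold.
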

Note that the diameter of $\{\partial(Z)|Z\in \mathcal{Z}_{a}\}$ is at most $2$ in $C(S)$.
A pair $f\in Mod(S)$ and $a \in C(S)$ in Corollary \ref{ccc2} can be found easily.  See Remark \ref{dry2}.

\begin{remark}
Pseudo--Anosov mapping classes obtained in this paper are special from the view point of their actions to the curve graph: their stable lengths are directly captured from the construction. In particular, if $g$ is a pseudo--Anosov mapping class obtained by Theorem \ref{almostall}, Corollary \ref{ccc1}, and Corollary \ref{ccc2}, then we have $d_{S}(a,f(a))-4 \leq ||g||\leq d_{S}(a,f(a))$. See Remark \ref{fix2}. Indeed, in $\S\ref{tl1}$, we construct pseudo--Anosov mapping classes which have invariant geodesics in the curve graph where we have $||g||= d_{S}(a,f(a))$. See Remark \ref{fix1}. 
\end{remark}

\subsection{Plan of the paper} In $\S\ref{preok}$, we collect necessary ingredients to prove the main theorem. First we will review the proofs given by Long--Morton ($\S \ref{sectionlm}$) and Fathi ($\S\ref{sectionf}$) in their constructions. Their approaches work beautifully with Dehn twists and multitwists, but not with partial pseudo--Anosov mapping classes. 
Our approach relies on an elementary criteria from $Mod(S) \curvearrowright C(S)$: if $f\in Mod(S)$ and $||f||>0$, then $f$ is pseudo--Anosov. Therefore, we will need to be able to capture the orbits of the mapping classes in $C(S)$, and we will use subsurface projection theory for this. Once again, the main difference with the constructions of Long--Morton and Fathi is that our main construction is not designed to produce pseudo--Anosov mapping classes with a specific type of pure mapping classes. In fact, the benefit of working with subsurface projection theory is that we can treat all types of pure mapping classes as the ``same'' type in virtue of their loxodromic actions on subsurfaces, see $\S\ref{machida}$. 

In $\S\ref{tl}$, we prove the main theorem. $\S\ref{tl1}$ presents a weaker/specific version of the main theorem with a weaker hypothesis on a given map $f$. The reader should think of $\S\ref{tl1}$ as the warm--up of $\S\ref{tl2}$. In $\S\ref{tl2}$, we will first obtain a technical proposition (Proposition \ref{i3}) which captures the orbits of mapping classes; the proof is a simple application of Theorem \ref{BGIT} and Theorem \ref{BEH}. Then we consider a family of mapping classes obtained by multiplying certain pure mapping classes to a given mapping class (Lemma \ref{actual}), and we show that, regardless of the type of pure mapping classes, almost all pure mapping classes give rise to pseudo--Anosov mapping classes (Theorem \ref{whatweprove}). We emphasize that the implication from Lemma \ref{actual} to Theorem \ref{whatweprove} is possible in virtue of Theorem \ref{BBF}.
The above three key tools (Theorem \ref{BGIT} and Theorem \ref{BBF} by Masur--Minksy, and Theorem \ref{BEH} by Behstock) will be reviewed in $\S \ref{3important}$. In fact, we will state and use the effective versions of them (Theorem \ref{BGIT} by Webb, Theorem \ref{BEH} by Leininger, and Theorem \ref{BBF} by Bestvina--Bromberg--Fujiwara). The explicit constants given by these effective results allow us to obtain an explicit constant in the main theorem. Corollary \ref{ccc1} and Corollary \ref{ccc2} come naturally out of the arguments given to prove the main theorem.


\renewcommand{\abstractname}{\textbf{Acknowledgements}}
\begin{abstract}
The author thanks Kenneth Bromberg for comments on this paper, Erik Guentner for inspirational conversations, Asaf Hadari, Dan Margalit, and Bal\'azs Stenner for useful conversations, and Tarik Aougab, Johanna Mangahas, and Samuel Taylor for explaining a work of Gadre--Tsai. 
Some parts of this paper were conisdered while the author was participating the following conferences held in 2016: \textit{Effective and Algorithmic Methods in Hyperbolic Geometry} at ICERM, \textit{Advanced School on Geometric Group Theory and Low-Dimensional Topology: Recent Connections and Advances} at ICTP, \textit{The 63rd Topology Symposium} at Kobe University, and \textit{Geometry of mapping class groups and $Out(F_n)$} at MSRI. The author thanks all organizers for supports on trips. Finally, the author thanks his home institute, the University of Hawaii at Manoa, for supports on trips. 
\end{abstract}

\section{Background}\label{preok}
First, we will review the approaches taken by Long--Morton \cite{LM} and Fathi \cite{Fathi}, and discuss some difficulties in applying their approaches to the setting of general pure mapping classes. 
\subsection{Long--Morton's approach}\label{sectionlm}
Long--Morton showed
\begin{theorem}[\cite{LM}]\label{longmorton}
Suppose $S$ is closed. Let $f\in Mod(S)$ and $a\in C(S)$. If $f$ is pseudo--Anosov, then $\{t_{a}^{j}\circ f | j\in \mathbb{Z}\}$ are pseudo--Anosov except for finitely many values of $j$
\end{theorem}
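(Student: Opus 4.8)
The plan is to prove Theorem \ref{longmorton} by controlling the stable length $\|t_a^j \circ f\|$ in the curve graph $C(S)$ and invoking the Masur--Minsky criterion: if $\|g\| > 0$ then $g$ is pseudo--Anosov. So the goal reduces to showing that $\|t_a^j \circ f\| > 0$ for all but finitely many $j$. First I would fix the annulus $A$ whose core is $a$ and work with the subsurface projection $\pi_A$ to the annular curve graph $C(A)$. Since $f$ is pseudo--Anosov, its invariant foliations are filling and in particular $\pi_A(f^{-1}(a))$ and $\pi_A(f(a))$ are both nonempty (indeed $f^{\pm 1}(a) \neq a$, and more strongly $f^{\pm 1}(a)$ must cross $a$, since otherwise a power of $f$ would fix a curve disjoint from $a$, contradicting pseudo--Anosov-ness). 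The Dehn twist $t_a$ acts on $C(A)$ as a loxodromic isometry with $\|t_a\|_A = 1$ (this is the classical computation of twisting in the annular curve graph, which is also recorded as Lemma \ref{dehnmin} in the paper), so $d_A(x, t_a^j(x)) \geq |j| - C$ for a uniform $C$ whenever $\pi_A(x) \neq \emptyset$.

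The core of the argument is then a Behrstock-inequality / bounded-geodesic-image style estimate applied along the orbit $x_k := (t_a^j \circ f)^k(x)$ for a well-chosen basepoint $x$, say $x = f^{-1}(a)$ or simply any curve crossing $a$. The key observation is that $(t_a^j \circ f)(y) = t_a^j(f(y))$, and the annular projection $\pi_A$ of $f(y)$ is $f$-moved but the twisting is then boosted by $|j|$. Quantitatively: for each step of the orbit, $d_A\big(x_k, x_{k+1}\big) = d_A\big(x_k, t_a^j(f(x_k))\big) \geq |j| - d_A(x_k, f(x_k)) - C'$. Now $d_A(x_k, f(x_k))$ is controlled because $f$ is a fixed pseudo--Anosov: its annular twisting about $a$ is a fixed finite number, call it $T = T(f,a)$, independent of $k$ once we note $f$ does not fix $a$ — more carefully, $d_A(y, f(y)) \leq d_A(y, a) + d_A(a, f(a)) + d_A(f(a), f(y))$ and $d_A(y, a), d_A(f(a), f(y))$ are bounded by $2$ when $y$ crosses $a$ appropriately, while $d_A(a, f(a))$ is the fixed constant $T$. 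Hence for $|j| > T + C' + 2$, each consecutive pair of orbit points has $d_A$-distance $\geq |j| - T - C' - 2 =: L > 0$. By the standard argument that consecutive large projections force the geodesic in $C(S)$ to pass near the boundary $\partial A = a$ (bounded geodesic image, Theorem \ref{BGIT} with its effective constant $M$), together with the fact that the successive ``coordinates'' $x_k$ cannot all project near $a$ in a bounded way unless the orbit in $C(S)$ genuinely moves — one shows $d_S(x, x_k) \to \infty$ linearly, giving $\|t_a^j \circ f\| > 0$.

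More precisely, the cleanest route is: suppose for contradiction $\|t_a^j\circ f\| = 0$ for infinitely many $j$; then for each such $j$ there is a power $m_j$ and a curve fixed by $(t_a^j \circ f)^{m_j}$, so the orbit $\{x_k\}$ stays within a bounded ball of $C(S)$. But then there is a uniform bound $D$ on the diameter of $\{x_k\}$ in $C(S)$, and hence (since large subsurface projections along a geodesic are bounded in number by $D$ via bounded geodesic image applied to a geodesic realizing the diameter) only boundedly many of the steps $x_k \to x_{k+1}$ can have $d_A(x_k, x_{k+1}) > M$. Since we arranged every step to have $d_A$-distance $\geq L$, picking $|j|$ large enough that $L > M$ and taking $m_j$ large produces more than $D$-many large-projection steps, a contradiction. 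This pins down the exceptional set: $\|t_a^j\circ f\| = 0$ can only happen when $|j| \leq T + C' + 2 + M$, a finite set.

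The main obstacle I expect is making the per-step annular estimate $d_A(x_k, x_{k+1}) \geq |j| - O(1)$ genuinely uniform in $k$: one must be sure the ``error'' $d_A(x_k, f(x_k))$ does not grow along the orbit. This is where the choice of basepoint and the pseudo--Anosov hypothesis matter — every $x_k$ is a curve that crosses $a$ (if some $x_k$ were disjoint from $a$ the projection $\pi_A(x_k)$ would be empty and the estimate breaks, but then $t_a^j$ would fix it and the next point would be $f(x_k)$, which again crosses $a$ because no power of $f$ fixes anything disjoint from $a$; one handles this finite-index bookkeeping directly). Granting that, $d_A(x_k, a) \leq 2$ uniformly, and $d_A(a, f(a))$ is the single fixed constant $T(f,a)$, so the estimate is uniform. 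The secondary technical point is the passage from ``every orbit step is $A$-large'' to ``the $C(S)$-orbit escapes'', which is exactly the content of bounded geodesic image (Theorem \ref{BGIT}) combined with the elementary fact that a geodesic of length $\ell$ meets the $1$-ball of $a$ for at most boundedly many consecutive vertices; this is routine but should be stated carefully since it is the crux of why ``$\|g\|>0 \Rightarrow g$ pseudo--Anosov'' gets leveraged.
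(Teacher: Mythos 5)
Your proposal is a genuinely different route from what the paper records for Theorem \ref{longmorton}: the paper (following \cite{LM}) proves it via Thurston hyperbolization of mapping tori and hyperbolic Dehn surgery on $M(f)\setminus N_\epsilon(a)$, whereas you propose a curve-graph argument via stable length, annular projections, and bounded geodesic image, in the spirit of the paper's own main theorem. That is an interesting idea, but as written there is a genuine gap in the central estimate.

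The claim that $d_A(x_k,x_{k+1})\geq |j|-O(1)$ holds \emph{uniformly in $k$} is false for $k\geq 1$, and the attempted justification does not repair it. Writing $g=t_a^j\circ f$ and $x_k=g^k(a)$, one has, in a $\mathbb{Z}$-coordinate on $C(A)$, $\pi_A(x_{k+1})=\pi_A(t_a^j f(x_k))\approx \pi_A(f(x_k))+j$. Now $x_k=t_a^j(f(x_{k-1}))$ for $k\geq 1$, so $f(x_k)=t_{f(a)}^j(f^2(x_{k-1}))$; since $f(a)$ and $a$ overlap, once $|j|$ is large Behrstock's inequality (Theorem \ref{BEH}) forces $\pi_A(f(x_k))$ to lie within distance $4$ of $\pi_A(f(a))$, a fixed bounded set, regardless of $k$. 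Hence $\pi_A(x_{k+1})\approx \pi_A(f(a))+j\approx\pi_A(x_k)$, i.e.\ $d_A(x_k,x_{k+1})$ is \emph{bounded} rather than comparable to $|j|$: the twist gain cancels with the (unbounded) quantity $d_A(x_k,f(x_k))\approx |j|$. Your proposed control $d_A(y,f(y))\leq d_A(y,a)+d_A(a,f(a))+d_A(f(a),f(y))$ is not available because $\pi_A(a)=\emptyset$, so $d_A(\,\cdot\,,a)$ is undefined. Separately, the assertion that $f^{\pm1}(a)$ must cross $a$ is not automatic for a pseudo--Anosov: $d_S(a,f(a))=1$ is possible even though no power of $f$ fixes any curve. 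The paper avoids the uniformity problem by projecting the orbit $\{g^k(a)\}$ to a \emph{moving} domain $Z=g(Y)$ (in your case, the annulus around $g(a)$, not around $a$) and proving by an inductive Behrstock argument that $d_Z(g^2(a),g^k(a))$ stays bounded for all $k\geq 2$ (Propositions \ref{sp} and \ref{i3}); that inductive step is exactly the missing ingredient needed to make a projection-based proof go through, and it is also why the paper needs the stronger hypothesis $d_S(a,f(a))>4$ rather than merely that $f$ be pseudo--Anosov.
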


Long--Morton use the following criterion due to Thurston: $f$ is pseudo--Anosov if and only if the mapping torus of $f$, denoted $M(f)$, is hyperbolic. Furthermore, they use the following fact due to Harer \cite{HARER} and Stallings \cite{STALLINGS}: $M(t_{a}^{j}\circ f )$ can be obtained from $M(f)$ by doing $(1,j)$--Dehn filling on $M(f)\setminus N_{\epsilon}(a)$ relative to a suitable choice of longitude in $\partial(N_{\epsilon}(a))$. They showed that $M(f)\setminus N_{\epsilon}(a)$ is hyperbolic \cite[Lemma 1.1 \& Theorem 1.2]{LM}, then the statement directly follows by Thurston's hyperbolic Dehn surgery theorem. Note that there have been many studies done toward understanding the maximal number of exceptional slopes in the Dehn surgery theorem. For instance, see the recent works by Agol \cite{Agol} and Lackenby--Meyerhoff \cite{LackenbyMeyerhoff} and references therein.

Lastly, we remark that if $t_{a}$ is replaced by a partial pseudo--Anosov mapping class $\phi$, then it is not straightforward to find a hyperbolic manifold $W$ so that Dehn fillings on $W$ give rise to $M(\phi^{j}\circ f )$.  

\subsection{Fathi's approach}\label{sectionf}
Fathi showed
\begin{theorem}[\cite{Fathi}]\label{fathi}
Suppose $S$ is closed. Let $f\in Mod(S)$ and $a\in C(S)$. If $\{f^{n}(a)|  n \in \mathbb{Z}\}$ fills $S$, then $\{t_{a}^{j}\circ f | j\in \mathbb{Z}\}$ are pseudo--Anosov except for at most 7 consecutive values of $j$.
\end{theorem}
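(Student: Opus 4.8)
The plan is to argue by contradiction using the action of $Mod(S)$ on the space $\mathcal{PML}(S)$ of projective measured laminations. Write $g_j := t_a^j \circ f$. First I would record that if $g_j$ is \emph{not} pseudo--Anosov --- that is, if $g_j$ is periodic or reducible --- then $g_j$ actually fixes (not merely projectively scales) a point of $\mathcal{PML}(S)$: in the reducible case take the canonical reduction system of $g_j$ with all weights equal to $1$; in the periodic case take any fixed point of the finite-order homeomorphism $g_j$ of the sphere $\mathcal{PML}(S)\cong S^{6g-7}$ (one exists by Smith theory) and note its scaling factor is a positive $m$-th root of unity, hence $1$, where $g_j^m=\mathrm{id}$. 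So for every ``bad'' index $j$ there is a measured lamination $L_j$ with $g_j(L_j)=L_j$.

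Next I would use the filling hypothesis to force $i(L_j,a)>0$ for every bad $j$. If $i(L_j,a)=0$, then $t_a$, hence $t_a^{-j}$, fixes $L_j$, and applying $t_a^{-j}$ to $t_a^{j}\big(f(L_j)\big)=L_j$ gives $f(L_j)=L_j$. But then $i\big(f^n(a),L_j\big)=i\big(f^n(a),f^n(L_j)\big)=i(a,L_j)=0$ for every $n$, so $L_j$ is a nonempty measured lamination disjoint from every curve of $\{f^n(a)\mid n\in\mathbb{Z}\}$ --- impossible once that set fills $S$, since the complementary regions of a filling set are disks and peripheral annuli and support no lamination. Hence every bad $L_j$ crosses $a$ essentially.

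The heart of the matter is to bound the set $B$ of bad indices. Since $t_a$ fixes $a$, applying $i(\,\cdot\,,a)$ to $t_a^{j}\big(f(L_j)\big)=L_j$ gives $i\big(f(L_j),a\big)=i(L_j,a)$, so we may normalise $i(L_j,a)=1$. Feeding the same relation into the standard twisting estimate $\big|\,i(t_a^{j}M,b)-|j|\,i(M,a)\,i(a,b)\,\big|\le i(M,b)$ (see \cite{FM}) with $M=f(L_j)$ and $b=c_0$, a fixed simple closed curve filling $S$, then yields $(1+C)\,i(L_j,c_0)\ge |j|\,i(a,c_0)$, where $C$ bounds $i\big(\,\cdot\,,f^{-1}(c_0)\big)/i(\,\cdot\,,c_0)$ on $\mathcal{PML}(S)$ --- finite because $c_0$ and $f^{-1}(c_0)$ are filling, so both $i(\,\cdot\,,c_0)$ and $i\big(\,\cdot\,,f^{-1}(c_0)\big)$ are positive continuous functions on the compact space $\mathcal{PML}(S)$. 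Thus, as $|j|\to\infty$, the classes $[L_j]$ enter every neighbourhood of the locus $\{L:i(L,a)=0\}$ of laminations not crossing $a$. On a chart of $\mathcal{PML}(S)$ around that locus, $t_a$ is a twist in the ``normal'' direction whose amount grows with $j$, and the equation $g_j(L_j)=L_j$ with $L_j$ crossing $a$ becomes the existence of a fixed point for a monotone self-map of a disk depending continuously on $j$; an intermediate-value/degree argument makes $B$ an \emph{interval} of integers, and a quantitative form of the twisting estimate caps $|B|$ at $7$.

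The main obstacle is exactly this last step: turning the soft ``increasing twist'' picture into the sharp count. Concretely one must (i) show the topological type of the invariant laminations $L_j$ stabilises as $|j|\to\infty$, (ii) pin down the piecewise-projective normal form of $t_a^{j}$ on the relevant chart of $\mathcal{PML}(S)$ --- where Fathi uses train tracks and the piecewise-linear structure on $\mathcal{ML}(S)$ --- and (iii) run the resulting fixed-point count to extract the constant $7$. Everything before (i)--(iii) is formal; all the real work, and the precise value of the constant, lives there. (Alternatively one could bypass (i)--(iii) with a $3$--manifold argument: show $M(f)\setminus N_\epsilon(a)$ is hyperbolic and apply a quantitative hyperbolic Dehn filling theorem to the slopes realising the various $M(g_j)$ --- the route along which the constant was later improved to $6$.)
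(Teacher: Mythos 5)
Your proposal is an outline, not a complete proof, and you say as much yourself: the closing paragraph identifies steps (i)--(iii) as ``all the real work'' and leaves them entirely undone. What you \emph{do} establish --- that any non--pseudo--Anosov $g_j = t_a^j\circ f$ fixes some $L_j\in\mathcal{ML}(S)$, that $i(L_j,a)>0$ by the filling hypothesis, and that after normalising $i(L_j,a)=1$ the twisting estimate forces $i(L_j,c_0)\to\infty$ so $[L_j]$ accumulates on $\{i(\cdot,a)=0\}$ --- gives only that the bad set is finite. That is a Long--Morton--type conclusion, not Fathi's. The theorem's content is the uniform bound of $7$ and that the bad indices are consecutive, and neither of those is obtained. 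The vague appeal to ``a monotone self-map of a disk'' and ``an intermediate-value/degree argument'' is not an argument; nothing in the proposal produces a number.

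This is structurally different from the route the paper reviews in \S\ref{sectionf}. Fathi does not pass through a hypothetical fixed lamination. He uses the reduced criterion $(\star)$ on $\mathcal{MF}(S)$ together with the quantitative inequality (Fathi's Proposition 5.2): there is a $\lambda_0$ so that for all $j$ and $\mathcal{F}$, $\max\{i(a,[t_a^j f]^k(a)),\,i(a,[t_a^j f]^{-k}(a))\}\ge \tfrac{|j-\lambda_0|-1}{2}\,i(a,f^k(a))\,i(a,\mathcal{F})$. Combined with the filling hypothesis (which supplies $(\star2)$), this shows $t_a^j\circ f$ is pseudo--Anosov whenever $|j-\lambda_0|>1+2/i(a,f^k(a))$, giving at most $2+4/i(a,f^k(a))\le 6$ as the diameter of the bad set, hence $7$ consecutive values. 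That single quantitative proposition is exactly the ingredient your proposal is missing.

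One further flaw worth flagging: the claim that a finite-order homeomorphism of $\mathcal{PML}(S)\cong S^{6g-7}$ has a fixed point ``by Smith theory'' is false. Smith theory allows the fixed set of a $\mathbb{Z}/p$ action on a mod-$p$ homology $n$--sphere to be the empty ($(-1)$--dimensional) sphere, and free finite cyclic actions on odd-dimensional spheres do exist. The conclusion you want (a periodic mapping class fixes a point of $\mathcal{ML}(S)$) is true, but because a finite-order mapping class preserves a curve system --- e.g.\ the systoles of an invariant hyperbolic metric furnished by Nielsen realization --- not by a sphere fixed-point theorem.
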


Fathi uses the following criterion due to Thurston: if $f$ acts on the space of measured foliations of $S$, denoted $\mathcal{MF}(S)$, without finite orbits, then $f$ is pseudo--Anosov. Fathi further reduced the criterion to the following \cite[Lemma 1.2]{Fathi}: ($\star$): Let $C>1$. If, for all $\mathcal{F}\in \mathcal{MF}(S)$, there exists $p,q\in \mathbb{Z}$ with $(\star 1)$: $i(a, h^{p}(\mathcal{F}))\geq C\cdot i(a, \mathcal{F})$ and $(\star 2)$: $i(a, h^{q}(\mathcal{F}))>0$ then $h$ does not act on $\mathcal{MF}(S)$ with finite orbits, hence $h$ is pseudo--Anosov.
Main ingredient \cite[Proposition 5.2]{Fathi}, which essentially follows from \cite[Theorem 4.1 \& Theorem 4.4]{Fathi}, is the following: there exists a constant $\lambda_{0} $ such that for all $j\in \mathbb{Z}$ and $\mathcal{F}\in \mathcal{MF}(S)$, $$\max \bigg\{i(a, [t_{a}^{j}\circ f]^{k}(a)), i(a, [t_{a}^{j}\circ f]^{-k}(a)) \bigg \}\geq \frac{|j-\lambda_{0}|-1}{2} \cdot i(a, f^{k}(a))\cdot i(a, \mathcal{F}).$$ 
Now, we investigate how $t_{a}^{j}\circ f$ can be pseudo--Anosov by using the above Fathi's reduced criterion ($\star$). First, since $\{f^{n}(a)| n\in \mathbb{Z}\}$ fills $S$, $\{[t_{a}^{j}\circ f]^{n}(a)| n\in \mathbb{Z}\}$ also fills $S$ for all $j \in \mathbb{Z}$ \cite[Lemma 5.3]{Fathi}, so we have $(\star 2)$.
For $(\star 1)$, let $C=\frac{|j-\lambda_{0}|-1}{2} \cdot i(a, f^{k}(a)).$ By ($\star$), if $C>1$ i.e. if $|j-\lambda_{0}|>1+ \frac{2}{i(a, f^{k}(a))}$, then $t_{a}^{j}\circ f$ is pseudo--Anosov. In other words, if neither $t_{a}^{n}\circ f$ nor $t_{a}^{m}\circ f$ is pseudo--Anosov, then $$|n-m|\leq 2+ \frac{4}{i(a, f^{k}(a))}\leq 6$$ and that is how Fathi's number $7$ comes from.

Lastly, we remark that Fathi's argument heavily relies on \cite[Theorem 4.1]{Fathi}, which makes use of Dehn twists and multitwists, see \cite[$\S4$]{Fathi}. His method does not seem to apply in the setting of a partial pseudo--Anosov mapping class $\phi$.

\subsection{Our approach}\label{sectionw}
Our approach uses an obvious criterion: if $f\in Mod(S)$ and $||f||>0$, then $f$ is pseudo--Anosov. Main machinery we use here is subsurface projection theory, much of which are due to Masur--Minsky \cite{MM2} and Behrstock \cite{BEH}. Benefit in working with subsurface projection theory is that we do not need to distinguish the types of pure mapping classes that we deal with.

\subsubsection{Subsurface projections}\label{shimokita}
The arc and curve graph $AC(S)$ is defined as follows; the vertices are isotopy classes of arcs and curves, and the edges are realized by disjointness. 
We briefly recall subsurface projections from \cite{MM2}. We let $R(A)$ denote a regular neighborhood of $A \subseteq S$.
\begin{itemize}
\item Non--annular projections:
suppose $Z\subseteq S$ such that $Z$ is not an essential annulus. We define a set--map $i_{Z}: C(S)\rightarrow AC(Z)$ by taking $\{x\cap Z\}$. We define a set--map $p_{Z}:AC(Z)\rightarrow C(Z)$ by taking $\{\partial(R(x\cup \partial(Z)) )\}.$ The subsurface projection to $Z$ is the composition $$\pi_{Z}=p_{Z}\circ i_{Z}:C(S)\rightarrow C(Z).$$ 

\item Annular projections:
suppose $Z\subseteq S$ such that $Z$ is an essential annulus. Take the annular cover of $S$ which corresponds to $Z$, compactify the cover with $\partial (\mathbb{H}^{2})$. We denote the resulting annular cover by $S^{Z}$. We first need to define the curve graphs for annuli. The vertices of $C(Z)$ are isotopy classes of arcs which connect two boundary components of $S^{Z}$, here the isotopy is relative to $\partial (S^{Z})$ pointwise. The edge between two vertices are realized by disjointness in the interior of $S^{Z}$. The subsurface projection to $Z$ is the set--map $$\pi_{Z}:C(S)\rightarrow C(Z)$$ by taking the lift of $x$ in $S^{Z}$. By abuse of notation, if $a\in C(S)$ then we often use notations $\pi_{a}$ and $d_{a}$ to represent $\pi_{R(a)}$ and $d_{R(a)}$ respectively.

\item For both types of projections, if $C\subseteq C(S)$ then $\pi_{Z}(C):=\cup_{c\in C} \pi_{Z}(c)$. If $C'\subseteq C(S)$ then we let $d_{Z}(C, C')$ denote $d_{Z}(\pi_{Z}(C), \pi_{Z}(C')).$ 
\end{itemize}

We note the following elementary fact from \cite{MM2}. It says, in particular, subsurface projections are coarsely well--defined.
\begin{lemma}\label{smallprojection}
Let $Z\subseteq S$. The image of a multicurve via $\pi_{Z}$ is bounded by $\3$.
\end{lemma}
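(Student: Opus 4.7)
The plan is to argue by cases on whether $Z$ is an essential annulus, since $\pi_Z$ is defined differently in the two cases.

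\textbf{Annular case.} If $Z$ is an essential annulus, I would lift the multicurve $M$ to the compactified annular cover $S^Z$. Because the covering map is a local homeomorphism and the components of $M$ are pairwise disjoint in $S$, their full preimages in $S^Z$ are pairwise disjoint. In particular, the arcs joining the two boundary circles of $S^Z$—i.e.\ the vertices of $C(Z)$ representing $\pi_Z(M)$—can be realized pairwise disjointly in the interior of $S^Z$, which by the definition of the edge relation in $C(Z)$ puts any two of them at distance at most $1$.

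\textbf{Non-annular case.} Fix any two vertices $\alpha,\beta \in \pi_Z(M)$. Unwinding $\pi_Z = p_Z \circ i_Z$, there exist components $\alpha'$ and $\beta'$ of $M \cap Z$ such that $\alpha$ is an essential component of $\partial R(\alpha' \cup \partial Z)$ and $\beta$ is an essential component of $\partial R(\beta' \cup \partial Z)$. Since $M$ is a multicurve, $\alpha'$ and $\beta'$ may be taken disjoint in $Z$. The key step is to exhibit a single curve $\gamma \in C(Z)$ disjoint from both $\alpha$ and $\beta$; by the triangle inequality this will give $d_Z(\alpha,\beta) \le 2$.

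Concretely, I would choose thin enough representatives so that $R(\alpha' \cup \partial Z)$ and $R(\beta' \cup \partial Z)$ are both contained in a thin $R(\alpha' \cup \beta' \cup \partial Z)$ with pairwise disjoint boundary curves in $Z$, and take $\gamma$ to be any essential component of $\partial R(\alpha' \cup \beta' \cup \partial Z)$. By the nesting and the choice of thin neighborhoods, $\gamma$ is disjoint from both $\alpha$ and $\beta$, so $d_Z(\alpha,\gamma) \le 1$ and $d_Z(\gamma,\beta) \le 1$, giving the desired bound.

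The one obstacle is the degenerate situation in which $\partial R(\alpha' \cup \beta' \cup \partial Z)$ has no essential component—equivalently, when $\alpha' \cup \beta'$ together with $\partial Z$ fills $Z$. This can only happen for very small $\xi(Z)$, and in those bounded cases the conclusion is verified directly: running through the finite list of topological configurations of two disjoint essential arcs or curves in such a $Z$, one checks that $\alpha$ and $\beta$ either coincide or can already be realized disjointly, so that $d_Z(\alpha,\beta) \le 2$ holds automatically.
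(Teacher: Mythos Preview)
The paper does not actually prove this lemma; it is simply recorded as ``the following elementary fact from \cite{MM2}'' with no argument given, so there is no in-paper proof to compare your attempt against.

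Your sketch is essentially the standard Masur--Minsky argument. The annular case is correct as written. In the non-annular case, the main step---producing a common neighbor of $\alpha$ and $\beta$ as an essential component of $\partial R(\alpha' \cup \beta' \cup \partial Z)$, after nesting the thin neighborhoods---is exactly the usual maneuver and is fine. One small quibble with your treatment of the degenerate situation: for the sporadic subsurfaces $Z \cong S_{1,1}$ or $S_{0,4}$ the curve graph $C(Z)$ uses the modified edge relation (minimal intersection number $1$ or $2$, respectively), so what you actually need to verify there is not that $\alpha$ and $\beta$ ``can already be realized disjointly'' but that they intersect minimally. As literally phrased, your claim is false in $S_{1,1}$, where no two distinct essential simple closed curves are disjoint. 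Once you replace ``disjoint'' by ``adjacent in the modified $C(Z)$'', the finite case check goes through and your bound of $2$ holds.
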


\subsubsection{loxodromic actions of pure mapping classes}\label{machida}
The goal of this section is to observe Corollary \ref{gru}. First, by Theorem \ref{GD} we have
\begin{lemma}\label{min}
If $f$ is a partial pseudo--Anosov mapping class supported on $Z\subseteq S$, then there exists $k(Z)\geq \frac{1}{162 \cdot |\chi(Z)|^{2}+30 \cdot |\chi(Z)|-10n}$ such that for any $x\in C(S)$ such that $\pi_{Z}(x)\neq \emptyset$ and any $j\in \mathbb{Z},$ $$d_{Z}(x,f^{j}(x))\geq k(Z)\cdot |j|.$$ 
\end{lemma}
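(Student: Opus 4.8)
The plan is to derive Lemma \ref{min} as a direct consequence of Theorem \ref{GD} applied inside the subsurface $Z$, using the fact that a partial pseudo-Anosov map supported on $Z$ restricts to a genuine pseudo-Anosov homeomorphism of $Z$. Recall that the curve graph $C(Z)$ carries an isometric action of $\mathrm{Mod}(Z)$, and $f|_Z$ is pseudo-Anosov on $Z$. Since $\pi_Z$ is coarsely $\mathrm{Mod}(Z)$-equivariant with respect to this action (the projection commutes with the homeomorphism up to the bounded ambiguity of Lemma \ref{smallprojection}), the orbit $\pi_Z(f^j(x))$ in $C(Z)$ is, up to uniformly bounded error, the orbit $(f|_Z)^j(\pi_Z(x))$.

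First I would observe that $Z$ has $n(Z) \leq n$ punctures arising from the original punctures of $S$ lying in $Z$, together with the boundary components of $Z$ which, for the purposes of the curve graph, behave like punctures; but the relevant point is that $|\chi(Z)| \leq |\chi(S)|$ and the denominator in Theorem \ref{GD} is monotone, so applying Theorem \ref{GD} to the surface $Z$ with $f|_Z$ pseudo-Anosov yields a constant $k(Z) \geq \frac{1}{162 \cdot |\chi(Z)|^2 + 30 \cdot |\chi(Z)| - 10 n(Z)}$. (One needs to be slightly careful about whether $n$ in the statement of the lemma refers to punctures of $S$ or of $Z$; I would state it so that the denominator is the one associated to $Z$, which is what Theorem \ref{GD} literally provides.) Then for any $x \in C(S)$ with $\pi_Z(x) \neq \emptyset$, pick a vertex $y \in \pi_Z(x) \subseteq C(Z)$ and apply Theorem \ref{GD} inside $C(Z)$ to get $d_Z(y, (f|_Z)^j(y)) \geq k(Z) \cdot |j|$. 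Since $(f|_Z)^j(y)$ lies within bounded distance of $\pi_Z(f^j(x))$ by equivariance and Lemma \ref{smallprojection}, and since $d_Z(x, f^j(x))$ is by definition the diameter of $\pi_Z(x) \cup \pi_Z(f^j(x))$, we recover the desired lower bound $d_Z(x, f^j(x)) \geq k(Z) \cdot |j|$ — possibly after absorbing a constant, which is harmless since for $|j|$ large the linear term dominates, and for small $|j|$ the inequality holds trivially once one checks the endpoints.

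The main obstacle is the bookkeeping around equivariance of subsurface projection: $\pi_Z$ is only coarsely equivariant, and one must ensure the additive error does not destroy the clean linear bound with no additive constant as stated. I would handle this either by noting that Theorem \ref{GD} gives a bound with a definite multiplicative constant and then checking that the coarse error is swallowed because $f|_Z$ being pseudo-Anosov means $d_Z(x, f^j(x)) \to \infty$, so after adjusting $k(Z)$ downward by an arbitrarily small amount (still within the stated inequality, which is a lower bound on $k(Z)$, not an equality) the clean form holds; or, more cleanly, by invoking the precise form of the Gadre–Tsai argument which already produces the inequality for \emph{all} $j$ including $j$ small, since at $j = 0$ both sides vanish and the projection of $x$ to itself has zero diameter. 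A secondary point worth a sentence is that $\pi_Z(f^j(x)) \neq \emptyset$ whenever $\pi_Z(x) \neq \emptyset$, because $f(Z) = Z$ implies $f$ maps curves meeting $Z$ to curves meeting $Z$; this guarantees $d_Z(x, f^j(x))$ is actually defined.
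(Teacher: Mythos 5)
Your approach is exactly the paper's: the paper states Lemma \ref{min} as an immediate consequence of Theorem \ref{GD} applied to $Z$ with the pseudo--Anosov restriction $f|_{Z}$, which is what you do. One simplification: the worry about coarse equivariance is unnecessary — the subsurface projection $\pi_{Z}$ is \emph{exactly} equivariant under the stabilizer of $Z$ in $Mod(S)$ (apply $f$ to a representative of $x$, take the intersection with $Z$, surger along $\partial Z$; each step commutes with $f$ since $f(Z)=Z$), so $\pi_{Z}(f^{j}(x)) = (f|_{Z})^{j}(\pi_{Z}(x))$ with no additive error, and the paper's convention $d_{Z}(A,B)=\max\{d_{Z}(a,b)\mid a\in A, b\in B\}$ lets you pick any $y\in\pi_{Z}(x)$ and read off $d_{Z}(x,f^{j}(x))\geq d_{Z}\bigl(y,(f|_{Z})^{j}(y)\bigr)\geq k(Z)\cdot|j|$ directly; there is nothing to absorb. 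Your observation about $n$ versus the puncture count of $Z$ in the denominator is a fair catch — the constant Theorem \ref{GD} literally provides involves the puncture count of $Z$, and the paper's statement is loose on this point, though it does not affect the downstream use in Corollary \ref{gru} since the bound there is phrased in terms of $\chi(S)$ anyway and the quadratic term dominates.
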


There are analogous results, in the setting of Dehn twists and multitwists, to the above result. 

\begin{lemma}[\cite{MM2}]\label{dehnmin}
Let $a\in C(S)$ and $t_{a}$ be Dehn twist along $a$. For any $x\in C(S)$ such that $\pi_{a}(x)\neq \emptyset$ and any $j\in \mathbb{Z}$, $$d_{a}(x,t_{a}^{j}(x))\geq |j|.$$
\end{lemma}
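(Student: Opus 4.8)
\textbf{Proof proposal for Lemma \ref{dehnmin}.}

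The plan is to reduce the statement to the standard computation of distances in the annular curve graph $C(R(a))$, which is quasi-isometric to $\mathbb{Z}$ via the algebraic intersection number of lifted arcs. First I would recall the following well-known fact (essentially \cite[Section 2.3]{MM2}): if $Z = R(a)$ is the annulus with core $a$, and $\alpha, \beta \in C(Z)$ are two vertices (arcs connecting the two boundary components of $S^{Z}$), then $d_{Z}(\alpha,\beta)$ equals $1$ plus the absolute value of the algebraic intersection number of $\alpha$ and $\beta$ in the interior of $S^{Z}$; in particular $d_{Z}(\alpha,\beta) \geq 1 + |\hat{i}(\alpha,\beta)|$ where $\hat{i}$ denotes this signed count. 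The key algebraic input is that the Dehn twist $t_{a}$ acts on the set of lifts by adding $\pm 1$ (depending on the orientation convention for the twist) to this intersection count against any fixed reference arc: concretely, for any $\gamma \in C(Z)$ one has $\hat{i}(t_{a}^{j}(\gamma), \delta) = \hat{i}(\gamma,\delta) + j\cdot \hat{i}(\text{core lift},\delta)$ for an appropriate reference, so that $\hat{i}(\gamma, t_a^j(\gamma)) = \pm j$ whenever $\gamma$ crosses the core once.

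The steps, in order, would be: (1) Fix $x \in C(S)$ with $\pi_{a}(x) \neq \emptyset$; pick any component $\tilde{x}$ of the preimage of $x$ in $S^{Z}$ that is an essential arc, so $\tilde{x} \in \pi_{a}(x) \subseteq C(Z)$. (2) Observe that $t_{a}$ lifts to a homeomorphism $\tilde{t}_{a}$ of $S^{Z}$ (since $t_{a}$ fixes $a$), and that $\tilde{t}_{a}$ realizes the standard generator of the deck-translation-normalized mapping class group of the annulus; hence $\pi_{a}(t_{a}^{j}(x))$ contains $\tilde{t}_{a}^{j}(\tilde x)$. (3) Compute $\hat{i}(\tilde{x}, \tilde{t}_{a}^{j}(\tilde{x})) = \pm j$ using the previous paragraph. (4) Conclude $d_{Z}(\tilde x, \tilde{t}_a^j(\tilde x)) \geq 1 + |j| \geq |j|$, and since this holds for the particular lift and $d_{a}(x, t_a^j(x)) = d_Z(\pi_Z(x), \pi_Z(t_a^j(x)))$ is a max over all lifts, the same lower bound holds for $d_{a}(x, t_{a}^{j}(x))$. (Here one should be slightly careful: $d_Z$ between sets is a max, so picking one pair of lifts that are far apart suffices for the lower bound; the coarse well-definedness of Lemma \ref{smallprojection} is not even needed for the inequality direction.)

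The main obstacle is purely bookkeeping rather than conceptual: making precise the claim that $t_{a}$ lifts to the ``unit'' twist on $S^{Z}$ and that its action on arcs shifts the algebraic intersection number by exactly $1$, with the correct sign conventions so that $|j|$ (not, say, $|j|/2$) appears. This requires setting up the annular cover, its compactification by $\partial\mathbb{H}^{2}$, and the identification of $C(Z)$ with (essential arcs of) $S^Z$ carefully enough to invoke the intersection-number formula from \cite{MM2}; once that identification is in place, the twist-by-$j$ computation is immediate and the inequality $d_{a}(x, t_a^j(x)) \geq |j|$ follows. Since all of this is standard and recorded in \cite{MM2}, I would keep the argument brief, citing \cite[Section 2.3, Lemma 2.3]{MM2} for the intersection-number description of $d_{Z}$ and only spelling out the one-line twist computation.
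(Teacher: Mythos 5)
Your proposal is essentially the same route the paper takes: reduce to the Masur--Minsky description of distance in the annular curve graph via (signed) intersection number of lifted arcs, and then track how $t_a$ shifts that count. The conclusion $d_a(x,t_a^j(x))\geq |j|$ is correct. However, there is a subtlety that your write-up glosses over and that the paper explicitly flags: the lift $\tilde t_a$ of $t_a$ to the compactified annular cover $S^{R(a)}$ is \emph{not} simply ``the standard generator of the annulus mapping class group.'' The full preimage of $a$ in $S^{R(a)}$ consists of the core curve together with infinitely many arcs (each with both endpoints on a single boundary component of $S^{R(a)}$), and the lifted twist acts at \emph{all} of these components, not just the core. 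The main effect (twisting about the core) is what contributes the $j$ you compute; the secondary effects slide the endpoints of $\pi_a(x)$ along $\partial(S^{R(a)})$ and, as worked out in Farb--Lubotzky--Minsky, change the intersection count by an additional amount. The paper records the precise outcome as $i(\pi_a(x),\pi_a(t_a^j(x)))=j+1$, i.e.\ $d_a(x,t_a^j(x))=|j|+2$, which is one more than your $|j|+1$. For the inequality $\geq|j|$ this discrepancy is harmless, but your step (3), stating $\hat{i}(\tilde x,\tilde t_a^j(\tilde x))=\pm j$ on the nose, is not quite right as an identity, and to make the lower bound airtight you should argue (or cite \cite{FLM}, as the paper does, in addition to \cite{MM2}) that the contributions from the other preimages of $a$ cannot cancel the core contribution.
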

 
Indeed, we have $d_{a}(x,t_{a}^{j}(x))= |j|+2$ on the above lemma, see \cite{MM2}. The proof is a bit tricky because $t_{a}$ has an effect to $\pi_{a}(x)$ at every component of the lift of $a$ in $S^{R(a)}$, but it is explained in the paper of Farb--Lubotzky--Minsky \cite{FLM}; the main effect occurs at the core curve of $S^{R(a)}$ by twisting along the core curve. At the other components of the lift of $a$, which are arcs whose endpoints are contained in a single boundary component of $S^{R(a)}$, there are minor effects by sliding along on these arcs, and these effects result into sliding the endpoints of $\pi_{a}(x)$ in $\partial(S^{R(a)})$ (but not $2\pi$ much) creating $2$ extra intersections. We refer the reader to \cite[Figure 1]{FLM}. Therefore, $$i(\pi_{a}(x), \pi_{a}(t_{a}^{j}(x)) )=j+1 \Longrightarrow d_{a}(x,t_{a}^{j}(x))= |j|+2.$$ 
 
 \begin{lemma}\label{multimin}
 Let $a\in C(S)$ and $m_{a}$ be a multitwist which contains a power of $t_{a}$. For any $x\in C(S)$ such that $\pi_{a}(x)\neq \emptyset$ and any $j\in \mathbb{Z}$, $$d_{a}(x,m_{a}^{j}(x))\geq |j|.$$ 
\end{lemma}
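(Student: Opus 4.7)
The plan is to decompose $m_a$ into its $t_a$-factor and a multitwist whose action on the annular curve graph $C(R(a))$ is bounded, and then reduce to Lemma \ref{dehnmin}. Write $m_a=t_a^{k_0}\cdot T$, where $T=\prod_{i}t_{b_i}^{k_i}$ is a multitwist along simple closed curves $b_i$ disjoint from $a$, pairwise disjoint, and each distinct from $a$, and $k_0$ is a nonzero integer; without loss of generality $k_0\geq 1$. Since disjoint Dehn twists commute, $m_a^j=t_a^{jk_0}\cdot T^j$.

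The key step is to show that $d_a(y,T^j(y))$ is bounded by an absolute constant independent of $j$ for any $y\in C(S)$ with $\pi_a(y)\neq\emptyset$. Represent $T$ by a homeomorphism supported in a small tubular neighborhood $\bigcup_i N(b_i)$ disjoint from a neighborhood of $a$, so its lift $\widetilde{T}$ to the annular cover $S^{R(a)}$ is supported disjointly from the core $\widetilde{a}$. Because each $b_i\neq a$ is disjoint from $a$, no $b_i$ is freely homotopic to a power of $a$ in $\pi_1(S)$, so no lift of $b_i$ closes up in $S^{R(a)}$: each component of the preimage of $N(b_i)$ is a strip, which is a topological disk in the compactified annulus $S^{R(a)}$ and is disjoint from $\widetilde{a}$. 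Since $\widetilde{T}$ is supported on a disjoint union of such disks, it has twist number zero about the core and is isotopic to the identity in the mapping class group of the closed annulus $S^{R(a)}$. In particular, $\widetilde{T}^j(\pi_a(y))$ is properly isotopic to $\pi_a(y)$, giving $d_a(y,T^j(y))\leq 2$ uniformly in $j$.

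The conclusion then follows from the triangle inequality and the isometric action of $t_a^{jk_0}$ on $C(R(a))$:
\begin{align*}
d_a(x,m_a^j(x)) &\geq d_a(x,t_a^{jk_0}(x)) - d_a(t_a^{jk_0}(x),m_a^j(x)) \\
&= d_a(x,t_a^{jk_0}(x)) - d_a(x,T^j(x)) \\
&\geq (|jk_0|+2) - 2 \;\geq\; |j|,
\end{align*}
using the sharp form $d_a(x,t_a^{jk_0}(x))=|jk_0|+2$ sketched after Lemma \ref{dehnmin} together with $|jk_0|\geq|j|$ (which follows from $|k_0|\geq 1$). The main technical obstacle is the claim about $T$: although $T$ is a nontrivial element of $Mod(S)$ and can act nontrivially on $C(S)$, its lift to the annular cover is isotopic to the identity precisely because the $b_i$, being disjoint from but distinct from $a$, only produce arc-type lifts in $S^{R(a)}$, confining the support of $\widetilde{T}$ to disks disjoint from the core. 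Intuitively, the annular projection to $a$ only sees the $t_a$-factor of the multitwist.
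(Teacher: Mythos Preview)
Your decomposition $m_a^j=t_a^{jk_0}\cdot T^j$ and the triangle-inequality reduction are a sensible alternative to the paper's direct intersection count, but the justification of the key bound $d_a(y,T^j(y))\leq 2$ contains a genuine gap. The claim that the lift $\widetilde{T}$ is ``supported on a disjoint union of such disks'' is false. Although $T$ is supported on $\bigcup_i N(b_i)$, its lift only \emph{covers} the identity on the complement of the preimage; on each connected component of that complement it is merely some deck transformation. Crucially, this complement is disconnected: every arc-lift $\widetilde{b_i}$ has both endpoints on a single boundary circle of the compactified annulus, so it cuts off a disk and separates $S^{R(a)}$. You may normalize $\widetilde{T}$ to be the identity on the component $C_0$ containing the core $\widetilde{a}$, but continuity across the first strip forces the unique lift of $t_{b_i}^{k_i}$ that is the identity on the $C_0$-side to translate the far side by $k_i$; thus $\widetilde{T}$ acts on the next component by a nontrivial deck transformation and moves points on the boundary circle at infinity. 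In particular $\widetilde{T}^j(\pi_a(y))$ and $\pi_a(y)$ have different endpoints and are \emph{not} isotopic rel $\partial S^{R(a)}$. Your appeal to the mapping class group of the closed annulus (not rel $\partial$) is vacuous---that group is trivial---and says nothing about arcs rel endpoints, which is exactly what vertices of $C(R(a))$ are.

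The paper's proof avoids this by estimating $i(\pi_a(x),\pi_a(m_a^j(x)))$ in one stroke: the $t_a$-factor contributes roughly $|j|$ crossings near the core, while the remaining $t_{b_i}$'s only slide the endpoints of the lifted arc along the boundary circles by less than a full turn, destroying at most two of those crossings, whence $i\geq |j|-1$ and $d_a\geq |j|$. Your route can probably be salvaged by arguing directly that $\alpha$ and $\widetilde{T}^j(\alpha)$---which agree on $C_0$ and are disjoint lifts of a simple curve on the outer pieces---have interior intersection $0$; but that is a separate argument, not a consequence of ``supported on disks''. Note finally that you genuinely need the sharp constant $2$: a bound $d_a(y,T^j(y))\leq C$ with $C>2$ would give only $|jk_0|+2-C$, which fails to dominate $|j|$ when $|k_0|=1$ and $|j|$ is small.
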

\begin{proof}
The proof is analogous to the case of Dehn twsits. The main difference is that there are additional effects from other Dehn twists (not $t_{a}$) contained in $m_{a}$. However, the lifts of the core curves of the annuli that support other Dehn twists are arcs whose endpoints are contained in a single boundary component of $S^{R(a)}$. Therefore, those Dehn twists have minor effect to $\pi_{a}(x)$ in $S^{R(a)}$ by sliding along on the corresponding arcs where the direction is determined by the signs of the twists. As before, the position of the endpoints of $\pi_{a}(x)$ changes in $S^{R(a)}$, but not $2 \pi$ much. We have $$i(\pi_{a}(x), \pi_{a}(m_{a}^{j}(x)) )\geq j-1 \Longrightarrow d_{a}(x,m_{a}^{j}(x))\geq  |j|.$$
\end{proof}

We will use the following corollary in the proofs of Lemma \ref{sp2} and Lemma \ref{actual} in \S\ref{tl}.
\begin{corollary}\label{gru}
Let $\phi \in PMod(S)$. There exists $k(S)\geq \frac{1}{162 \cdot |\chi(S)|^{2}+30 \cdot |\chi(S)|-10n}$ such that for any $Y\in \A(\phi)$, any $x\in C(S)$ such that $\pi_{Y}(x)\neq \emptyset$, and any $j\in \mathbb{Z},$ $$d_{Y}(x,\phi^{j}(x))\geq k(S)\cdot |j|.$$
\end{corollary}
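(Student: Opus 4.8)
The plan is to unify the three preceding lemmas — Lemma~\ref{min} for partial pseudo--Anosov maps, Lemma~\ref{dehnmin} for Dehn twists, and Lemma~\ref{multimin} for multitwists — under the single definition of a loxodromic domain. By the definition of $\A(\phi)$, for each $Y\in \A(\phi)$ there is \emph{some} constant $k>0$ (depending a priori on $\phi$ and $Y$) witnessing the inequality $d_{Y}(x,\phi^{j}(x))\geq k\cdot|j|$; the content of the corollary is that one uniform $k(S)$, depending only on $S$, works for \emph{all} $\phi\in PMod(S)$ and all $Y\in\A(\phi)$ simultaneously, with the explicit bound from Theorem~\ref{GD}. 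The key point is that $\phi$ is pure, so $\phi$ restricted to an invariant subsurface $Y$ is of exactly one Nielsen--Thurston type, and we can pin down $k$ case by case.

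First I would fix $\phi\in PMod(S)$ and $Y\in\A(\phi)$, and use the pure structure of $\phi$: since $\phi$ fixes $Y$ setwise and is pure, $\phi|_Y$ is either (i) the identity on $Y$, (ii) a power of a Dehn twist about the core of $Y$ when $Y$ is an annulus, (iii) a multitwist, or (iv) a (partial) pseudo--Anosov supported on $Y$. Case (i) is impossible: if $\phi|_Y$ were trivial then $d_Y(x,\phi^j(x))=0$ for all $x,j$, contradicting $Y\in\A(\phi)$. In case (iv), $Y$ must be exactly the support of the pseudo--Anosov piece (an invariant subsurface on which $\phi$ restricts to pseudo--Anosov), so Lemma~\ref{min} gives $d_{Y}(x,\phi^{j}(x))\geq k(Y)\cdot|j|$ with $k(Y)\geq \frac{1}{162\cdot|\chi(Y)|^{2}+30\cdot|\chi(Y)|-10n}$; since $|\chi(Y)|\leq|\chi(S)|$, this quantity is at least $\frac{1}{162\cdot|\chi(S)|^{2}+30\cdot|\chi(S)|-10n}$, because the denominator is increasing in $|\chi|$. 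In cases (ii) and (iii), $Y$ is the annulus about the core curve $a$ of the relevant twist, and Lemma~\ref{dehnmin} or Lemma~\ref{multimin} gives $d_{Y}(x,\phi^{j}(x))\geq|j|$, which is certainly $\geq \frac{1}{162\cdot|\chi(S)|^{2}+30\cdot|\chi(S)|-10n}\cdot|j|$ since that constant is $\leq 1$.

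Then I would set $k(S):=\frac{1}{162\cdot|\chi(S)|^{2}+30\cdot|\chi(S)|-10n}$ (or anything $\geq$ it, as in the statement), and observe that in every case the displayed inequality holds with this uniform constant, independent of the choice of $\phi$ and $Y$. This finishes the proof.

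The step that needs the most care is verifying that the four cases above genuinely exhaust the possibilities for $\phi|_Y$ when $Y$ is a loxodromic domain of a pure mapping class, and that in the pseudo--Anosov case $Y$ coincides with the support so that Lemma~\ref{min} is directly applicable — in principle $Y$ could be a proper invariant subsurface of the pseudo--Anosov support or could properly contain it, and one has to rule this out, or else argue that $\pi_Y$-distances are still controlled below by the pseudo--Anosov dynamics on the relevant piece. This is essentially the canonical reduction system / Nielsen--Thurston decomposition of a pure mapping class, and the monotonicity of $162t^2+30t-10n$ in $t=|\chi|\geq 1$ is the only numerical input needed beyond the three cited lemmas.
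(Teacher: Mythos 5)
Your proposal is correct and takes essentially the same approach as the paper: the paper's proof is the one-line observation that the statement follows by combining Lemma~\ref{min}, Lemma~\ref{dehnmin}, and Lemma~\ref{multimin}, which is exactly the case analysis you carry out. You supply more detail than the paper does (the monotonicity of $162t^2+30t-10n$ in $t=|\chi|$, and the verification that the four cases for $\phi|_Y$ are exhaustive for a loxodromic domain of a pure mapping class), but the underlying argument is the same.
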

\begin{proof}
The statement follows by combining Lemma \ref{min}, Lemma \ref{dehnmin}, and Lemma \ref{multimin}. 
\end{proof}

\subsubsection{Three key tools}\label{3important}
This section collects main tools to be used in the paper. 

The following theorem is originally due to Masur--Minksy \cite{MM2}. Webb effectivized the theorem \cite{WEB1}. 
\begin{theorem}[Bounded Geodesic Image Theorem]\label{BGIT}
If every vertex of a geodesic projects nontrivially to $Z\subsetneq S$, then the image of the geodesic via $\pi_{Z}$ is bounded by $M\leq 100$.
\end{theorem}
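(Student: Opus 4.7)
The plan is to prove the bounded geodesic image theorem by combining the coarse Lipschitz behavior of subsurface projection on disjoint curves with the (uniform) hyperbolicity of the curve graph. The first ingredient is the easy observation that if $\alpha, \beta \in C(S)$ are adjacent, i.e.\ can be realized disjointly, and both project non-trivially to $Z$, then $\pi_Z(\alpha)\cup \pi_Z(\beta)$ is a multicurve on $Z$ (or a disjoint pair of arcs, in the annular case), so by Lemma \ref{smallprojection} we have $d_Z(\alpha,\beta)\le 2$. Applied naively to a geodesic $[v_0,\dots,v_n]$ in $C(S)$ whose every vertex projects non-trivially, this only gives $d_Z(v_0,v_n)\le 2n$, which is off by the length of the geodesic. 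The content of the theorem is that this naive bound can be replaced by a universal constant, so additional geometric input is needed.

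The main input I would invoke is the uniform hyperbolicity of all curve graphs, due to Hensel--Przytycki--Webb, together with the observation that $\partial Z$ is within $C(S)$-distance $1$ of any curve disjoint from $Z$. I would then run a contradiction argument: suppose the diameter of the image of the geodesic in $C(Z)$ exceeded $M$, and select vertices $v_i,v_j$ on the geodesic whose projections $\pi_Z(v_i),\pi_Z(v_j)$ are far apart in $C(Z)$. One can construct an alternative path in $C(S)$ from $v_i$ to $v_j$ by following a geodesic in $C(Z)$ between $\pi_Z(v_i)$ and $\pi_Z(v_j)$ and bridging consecutive vertices through $\partial Z$, so that its length is controlled by $d_Z(v_i,v_j)$ plus a constant. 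Geodesicity of $[v_0,\dots,v_n]$ in $C(S)$ forces $|j-i|$ to be at most this length; combining with the Lipschitz bound from the first paragraph, which says $|j-i|$ controls $d_Z(v_i,v_j)$ from above as well, and then feeding both inequalities into the hyperbolicity of $C(Z)$ to exclude the degenerate case where the geodesic hugs $\partial Z$, one concludes that $d_Z(v_i,v_j)$ must be uniformly bounded.

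The hard part, and the reason this theorem has a genuine proof rather than being a one-line consequence of Lemma \ref{smallprojection}, is making the argument explicit enough to recover $M\le 100$. Running the contradiction purely through abstract hyperbolicity yields only a non-effective bound, as in the original Masur--Minsky argument. The modern effective route, which I would follow, is via the unicorn arc/surgery machinery of Hensel--Przytycki--Webb and Webb: between any two arcs or curves there is a distinguished family of \emph{unicorn paths} whose length is comparable to the distance and whose interaction with subsurface projection can be tracked explicitly. Replacing arbitrary geodesics by unicorn paths in the above argument, and carefully bookkeeping all the constants as Webb does, is what ultimately produces the explicit universal bound $M\le 100$ asserted in the statement.
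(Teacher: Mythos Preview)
The paper does not prove Theorem~\ref{BGIT}; it is quoted as a black box from Masur--Minsky \cite{MM2}, with the effective constant $M\le 100$ attributed to Webb \cite{WEB1}. So there is no argument in the paper to compare your proposal against, and your task was really to reproduce a known external proof.

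That said, the argument you sketch in the second paragraph has a genuine gap. The two inequalities you extract are
\[
|j-i| \;\le\; d_Z(v_i,v_j) + C
\qquad\text{and}\qquad
d_Z(v_i,v_j)\;\le\; 2\,|j-i|,
\]
the first from your alternative path through $C(Z)$, the second from the coarse Lipschitz bound. These are perfectly compatible for arbitrarily large $d_Z(v_i,v_j)$: substituting one into the other yields only $|j-i|\le 2|j-i|+C$, which is vacuous. Your phrase ``feeding both inequalities into the hyperbolicity of $C(Z)$'' does not name an actual mechanism, and none is visible---hyperbolicity of $C(Z)$ says nothing about how the ambient $C(S)$-geodesic sits relative to $\partial Z$. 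There is also a problem one step earlier: your alternative path requires connecting $v_i$ to something in $C(Z)$ (or to $\partial Z$) in a bounded number of steps, but a vertex $v_i$ that projects non-trivially to $Z$ can still be arbitrarily far from $\partial Z$ in $C(S)$, so the ``$+\,C$'' is not justified.

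What is actually needed is a contraction property of $\pi_Z$, not merely the Lipschitz bound: once a $C(S)$-geodesic has moved a bounded distance past the region near $\partial Z$, its $\pi_Z$-image must coarsely freeze. Masur--Minsky obtain this via their hierarchy/tight-geodesic machinery; Webb's effective proof works directly with tight geodesics and an explicit combinatorial/surgery analysis to bound how many consecutive vertices can have moving projection. Your final paragraph, invoking Webb's unicorn/tight-geodesic methods, is pointing at the correct proof, but it is doing all of the work and is disjoint from the argument you actually wrote out in the second paragraph.
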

\begin{remark}\label{rBGIT}
In this paper, we assume $M$ in Theorem \ref{BGIT} is bigger than $5$. 
\end{remark}

The following theorem is originally due to Behrstock \cite{BEH}. Leininger effectivized the theorem; a complete proof can be found in the paper of Mangahas \cite{Mangahas}. Recall that two subsurface $Z,W\subseteq S$ are said to be overlapping if they are neither disjoint nor nested. 

\begin{theorem}[Behrstock's inequality]\label{BEH}
Let $Z,W\subsetneq S$ be overlapping surfaces and $\mu$ be a multicurve. If $d_{Z}(\partial(W),\mu)>9$ then $d_{W}(\partial(Z),\mu)\leq 4.$ 
\end{theorem}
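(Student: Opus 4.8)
The plan is to produce inside $W$ a single essential, properly embedded arc $e$ that lies at uniformly bounded $C(W)$--distance from both $\pi_{W}(\partial(Z))$ and $\pi_{W}(\mu)$; since Lemma \ref{smallprojection} already bounds the diameter of each of these projection sets by $2$, exhibiting such an $e$ and then bookkeeping the additive constants will give $d_{W}(\partial(Z),\mu)\leq 4$. Throughout, realize $\mu$, $\partial(W)$ and $\partial(Z)$ pairwise in minimal position. Since $W$ and $Z$ overlap, $\partial(W)\cap Z$ and $\partial(Z)\cap W$ are nonempty arc systems, so the projections appearing in the statement are defined; moreover we may assume $\mu$ crosses both $\partial(W)$ and $\partial(Z)$, because if $\mu$ were disjoint from $\partial(Z)$ then $\mu\cup\partial(Z)$ would be a multicurve and Lemma \ref{smallprojection} would give $d_{W}(\partial(Z),\mu)\leq 2$, finishing the proof.

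The one place the hypothesis enters is the following observation: because $d_{Z}(\partial(W),\mu)>2$, the arc systems $\mu\cap Z$ and $\partial(W)\cap Z$ cannot be realized disjointly inside $Z$ (else the elementary diameter estimate behind Lemma \ref{smallprojection} would force $d_{Z}(\partial(W),\mu)\leq 2$). Hence some component arc $\alpha$ of $\mu\cap Z$ crosses $\partial(W)$ at a point interior to $Z$. Note $\alpha$ is a sub-arc of $\mu$ lying in $Z$, so its interior is disjoint from $\partial(Z)$, while its two endpoints lie on $\partial(Z)$.

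Now cut $\alpha$ along $\partial(W)$ and let $\beta$ be a piece contained in $\overline{W}$; its interior lies in $\mathrm{int}(Z)$, so $\beta$ is disjoint from $\partial(Z)$ except possibly at its endpoints. If both endpoints of $\beta$ lie on $\partial(W)$, then $\beta$ is a properly embedded arc in $W$ which (after discarding any inessential sub-arcs of $\alpha\cap W$, of which a bigon-free essential one exists since $\alpha$ meets $\partial(W)$ essentially) is a component of $\mu\cap W$ disjoint from $\partial(Z)$; take $e=\beta$. Otherwise one endpoint $p$ of $\beta$ lies on $\partial(Z)\cap\mathrm{int}(W)$, i.e.\ the arc $\alpha$ terminates while still inside $W$; in this case surger $\beta$ by following it to $p$ and then running along the arc of $\partial(Z)\cap W$ through $p$ out to $\partial(W)$, producing a properly embedded arc $e$ in $W$ that is still isotopic off $\partial(Z)$. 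In both cases $d_{W}\big(p_{W}(e),\pi_{W}(\partial(Z))\big)$ is controlled by the coarseness constants of Lemma \ref{smallprojection}; and $e$ either is, or is uniformly $C(W)$--close to, a genuine component of $\mu\cap W$, so $d_{W}\big(p_{W}(e),\pi_{W}(\mu)\big)$ is bounded as well. Adding up the constants yields the claim.

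The main obstacle is the second (surgered) case: the natural competitor for $\pi_{W}(\mu)$ there, namely the component of $\mu\cap W$ containing $\beta$, crosses $\partial(Z)$ and so is not directly usable on the $\partial(Z)$ side, and one must show that it and the surgered arc $e$ differ by a controlled amount in $C(W)$. Tracking this carefully is exactly what is needed to sharpen the absolute constants to the stated $9$ and $4$; I would follow Leininger's refinement of Behrstock's inequality \cite{BEH} as written up by Mangahas \cite{Mangahas}. A cruder accounting of all the additive errors already establishes the qualitative inequality with some fixed constants in place of $9$ and $4$.
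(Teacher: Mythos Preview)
The paper does not prove Theorem~\ref{BEH}; it is quoted as a known result, originally due to Behrstock \cite{BEH} with the explicit constants due to Leininger (written up by Mangahas \cite{Mangahas}). There is therefore no proof in the paper to compare against---the paper simply cites the literature, which is exactly what you do at the end of your proposal.

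Your sketch is in the spirit of Leininger's argument, and case~(a) (both endpoints of $\beta$ on $\partial W$) is handled correctly and already yields $d_{W}(\partial Z,\mu)\le 4$. In case~(b), however, there is a genuine gap that is more than a matter of constants. The surgered arc $e=\beta\cup(\text{arc of }\partial Z\cap W)$ is indeed close to $\pi_{W}(\partial Z)$, but your assertion that $e$ is ``uniformly $C(W)$--close to a genuine component of $\mu\cap W$'' is not justified: the component $\delta$ of $\mu\cap W$ containing $\beta$ may, after leaving $Z$ at $p$, cross the $\partial Z$--portion of $e$ arbitrarily many times, so $e$ and $\delta$ need not be close in $AC(W)$ at all. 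Sharing the sub-arc $\beta$ does not by itself bound their distance. Consequently your final sentence---that a cruder accounting already gives the qualitative inequality---is not supported by what you have written; you have in fact only used $d_{Z}(\partial W,\mu)>2$, and the full strength of the hypothesis is needed precisely to control case~(b). Since you ultimately defer to \cite{Mangahas}, and the paper itself does no more than that, this is not fatal---but you should not claim that the qualitative result follows from the argument as it stands.
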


The following theorem plays an important role in the paper. It allows us to apply our technical machinery to work for almost all pure mapping classes. The theorem is originally due to Masur--Minksy \cite{MM2}. Bestvina--Bromberg--Fujiwara effectivized the theorem with an explicit constant \cite{BBF}.
\begin{theorem}[Finitely many large projections]\label{BBF}
For any given pair of curves, $x,y\in C(S)$, there are only finitely many subsurfaces $W$ with $d_{W}(x,y)>3.$ 
\end{theorem}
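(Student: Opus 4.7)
The plan is to reduce the statement to an intersection-number bound on $\partial W$, and then invoke a finiteness principle for multicurves with prescribed intersection data against the fixed pair.

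The key technical lemma I would prove is: if $d_W(x,y) > 3$, then $i(x, \partial W) \leq 2\, i(x,y)$ and $i(y, \partial W) \leq 2\, i(x,y)$. This is precisely the bound quoted in Remark \ref{aboutthemt}. To establish it, realize $x$ and $y$ in minimal position and examine the essential arc components of $x \cap W$ and $y \cap W$. Each such arc contributes, after the surgery $p_W \circ i_W$ defining $\pi_W$, a curve in $\pi_W(x)$, respectively $\pi_W(y)$. A lower bound of $3$ on $d_W(\pi_W(x), \pi_W(y))$ means no curve coming from an $x$-arc can be realized disjointly from a curve coming from a $y$-arc inside $W$; after undoing the surgery this forces essential crossings of $x$-arcs with $y$-arcs in the interior of $W$, in proportion to the number of arcs. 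Matching these forced interior intersections against the endpoint count $2\, i(x,\partial W)$ and $2\, i(y,\partial W)$ on $\partial W$ yields the stated linear bound. The annular case is parallel, but carried out in the compactified cover $S^W$, where one tracks the relative twisting of lifted arcs rather than intersections.

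Given the lemma, finiteness follows by a combinatorial argument. Each $W$ is determined by $\partial W$ (respectively its core curve, in the annular case) together with a choice of which complementary components of $\partial W$ to include, and the latter choice is finite because $S$ has bounded topology. So it suffices to bound the number of candidate multicurves. By the lemma, each candidate has intersection at most $2\, i(x,y)$ with each of $x$ and $y$; since the complement of $x \cup y$ in $S$ has only finitely many pieces, each of finite type, and each arc of $\partial W$ cut out by $x \cup y$ is determined up to isotopy (rel endpoints) by its combinatorial type in one of these pieces, a bounded total number of arcs admits only finitely many combinatorial patterns and hence only finitely many isotopy types of multicurves.

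The main obstacle is the intersection bound itself. One must carefully convert arc-level information into curve-level information through the surgery $p_W$, then translate the $C(W)$-distance lower bound into a lower bound on interior intersections of $x$ and $y$ inside $W$. The factor of $2$ traces back to each intersection of $x$ with $\partial W$ contributing two arc-endpoints on $\partial W$, and the bookkeeping is genuinely different in the annular case, where one works in $S^W$ with twisting invariants in place of geometric intersection numbers.
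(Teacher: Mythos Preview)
The paper does not supply its own proof of this theorem; it is quoted as background, attributed to Masur--Minsky with the effective constant~$3$ due to Bestvina--Bromberg--Fujiwara, and the reader is referred to \cite{BBF}. Your outline is precisely the Bestvina--Bromberg--Fujiwara argument: the intersection bound $i(x,\partial W),\, i(y,\partial W) \leq 2\, i(x,y)$ is exactly the byproduct of that proof which the paper later invokes (without reproving it) in Remark~\ref{aboutthemt} and in the proof of Corollary~\ref{cccc1}. So there is nothing in the paper to compare against beyond the citation, and your reconstruction is aligned with the cited source.

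One caution on your finiteness deduction: bounding $i(\partial W, x)$ and $i(\partial W, y)$ does not by itself yield finitely many candidate multicurves unless $x \cup y$ fills $S$; when it does not, infinitely many curves live in the complement of $x\cup y$ and your arc-counting in complementary pieces breaks down. You must additionally use that $\pi_W(x)$ and $\pi_W(y)$ are nonempty (implicit in $d_W(x,y)>3$), which forces $W$ to meet both $x$ and $y$ essentially, and then handle separately the finitely many $W$ whose boundary misses one of the curves but which contain that curve in their interior. This is routine but should be made explicit.
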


\section{Construction}\label{tl}
By abuse of notation, we use the following notations for the rest of the paper: let $Z,W\subseteq S$. By $\pi_{Z}(W)$, we mean $\pi_{Z}(\partial(W))$. Hence, $d_{Z}(W,)$ measures the distance from $\pi_{Z}(\partial(W))$ in $C(Z)$.  
Our construction of pseudo--Anosov mapping classes relies on the following elementary criterion: if $f\in Mod(S)$ and $||f||>0$, then $f$ is pseudo--Anosov. 

\subsection{Warm--up}\label{tl1}
The goals of this section are to prove a weaker/specific version of the main theorem in $\S\ref{tl2}$ and to construct pseudo--Anosov mapping classes which have invariant geodesics in the curve graph, which arise from the construction together with their stable lenghs.

The following proposition will capture the orbits of the mapping classes we will obtain:
\begin{proposition}\label{sp}
Let $a\in C(S)$ such that it is a non--separating curve and $g\in Mod(S)$ such that $d_{S}(a,g(a))>0.$ Let $Z$ be the complement of $g(a)$ in $S$. If $d_{Z}(a,g^{2}(a))>M+9,$ then, for all $i\geq 2$, we have 
\begin{enumerate}
\item $d_{Z}(g^{2}(a), g^{i}(a))\leq 9$.
\item $d_{S}(a,g^{i}(a))=  i\cdot d_{S}(a,g(a))$.
\end{enumerate}
\end{proposition}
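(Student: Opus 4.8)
The plan is to set up a telescoping/induction argument controlled by the Bounded Geodesic Image Theorem (Theorem \ref{BGIT}) and Behrstock's inequality (Theorem \ref{BEH}), with the hypothesis $d_{Z}(a,g^{2}(a))>M+9$ playing the role of a ``large projection'' that forces geodesics from $a$ to later orbit points to pass through the annular/subsurface structure around $g(a)$. First I would prove claim (1) by induction on $i$. The base case $i=2$ is trivial. For the inductive step, suppose $d_{Z}(g^{2}(a),g^{i}(a))\le 9$; I want $d_{Z}(g^{2}(a),g^{i+1}(a))\le 9$. Apply $g$ to everything: $g(Z)$ is the complement of $g^{2}(a)$, and by equivariance of subsurface projection $d_{g(Z)}(g(a),g^{3}(a))=d_{Z}(a,g^{2}(a))>M+9$, and more generally $d_{g(Z)}(g^{2}(a),g^{i+1}(a))=d_{Z}(g(a),g^{i}(a))$. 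The point is that $Z$ and $g(Z)$ overlap (since $d_{S}(g(a),g^{2}(a))=d_{S}(a,g(a))>0$ and $a$ non-separating makes $Z$ a connected subsurface carrying $g(a)$ nontrivially, so $g(a)\in\partial(g(Z))$ projects into $Z$ and $g^{2}(a)\in\partial(Z)$ projects into $g(Z)$), and then Behrstock's inequality relates $d_{Z}(g(a),\cdot)$ and $d_{g(Z)}(g^{2}(a),\cdot)$. Concretely: $d_{g(Z)}(g^{2}(a),g^{i+1}(a))=d_{Z}(g(a),g^{i}(a))$, which by the triangle inequality and the inductive hypothesis is at least $d_{Z}(g(a),g^{2}(a))-d_{Z}(g^{2}(a),g^{i}(a))$; combined with $d_{Z}(a,g^{2}(a))>M+9$ and a bound on $d_{Z}(a,g(a))$ (via BGIT applied to a geodesic from $a$ to $g(a)$, noting $g(a)$ cannot project to $Z$ as $g(a)=\partial(Z)$... so actually I would instead bound $d_{Z}(g(a),g^{2}(a))$ directly), this should be large enough — bigger than $9$ — so Behrstock gives $d_{Z}(\partial(g(Z)),g^{i+1}(a))=d_{Z}(g(a),g^{i+1}(a))\le 4$. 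Then $d_{Z}(g^{2}(a),g^{i+1}(a))\le d_{Z}(g^{2}(a),g(a))+d_{Z}(g(a),g^{i+1}(a))$; I still need the first term small, but $d_{Z}(g^{2}(a),g(a))$ need not be small, so the clean formulation is really to first show $d_{Z}(g(a),g^{i}(a))\le 4$ for all $i\ge 2$ by this Behrstock bootstrapping, and then get (1) from it together with $d_{Z}(g(a),g^{2}(a))\le 5$ or so from BGIT.

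For claim (2), the idea is the standard ``large subsurface projection forces the geodesic to make progress'' argument. Fix $i\ge 2$ and consider a geodesic $\gamma$ in $C(S)$ from $a$ to $g^{i}(a)$. By the contrapositive of Theorem \ref{BGIT}, since $d_{Z}(a,g^{i}(a))\ge d_{Z}(a,g^{2}(a))-d_{Z}(g^{2}(a),g^{i}(a))>M+9-9=M$, some vertex of $\gamma$ fails to project to $Z$, hence is disjoint from (or equal to) $g(a)=\partial(Z)$, i.e. lies in the closed $1$-ball around $g(a)$; so $d_{S}(a,g^{i}(a))\ge d_{S}(a,g(a))+d_{S}(g(a),g^{i}(a))-2$. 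Iterating this over the orbit — applying powers of $g$ and using that $d_{g^{k}(Z)}(g^{k}(a),g^{k+2}(a))>M+9$ and $d_{g^{k}(Z)}(g^{k+2}(a),g^{i}(a))\le 9$ for $k+2\le i$ — gives $d_{S}(a,g^{i}(a))\ge i\cdot d_{S}(a,g(a)) - (\text{error})$; to kill the error and get equality rather than a coarse bound I would instead run a cleaner version: show that $g(a)$ (and its translates $g^{k}(a)$) must lie \emph{on} a geodesic from $a$ to $g^{i}(a)$, not merely near one, by combining the disjointness just obtained with the additivity of stable-length-type estimates, together with the trivial upper bound $d_{S}(a,g^{i}(a))\le\sum_{k=0}^{i-1} d_{S}(g^{k}(a),g^{k+1}(a))=i\cdot d_{S}(a,g(a))$. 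The matching lower bound $d_{S}(a,g^{i}(a))\ge i\cdot d_{S}(a,g(a))$ then comes by induction: from the disjointness vertex argument, $d_{S}(a,g^{i}(a))\ge d_{S}(a,g(a))+d_{S}(g(a),g^{i}(a))-2$, but one needs to recover the lost $2$; this is recovered because the relevant vertex is forced to be a curve disjoint from $g(a)$ that \emph{also} has large projection to $Z$-side data, pinning it down so that no shortcut exists. I expect this ``$-2$ recovery'' to be the main obstacle: turning the coarse BGIT estimate into the exact equality $d_{S}(a,g^{i}(a))=i\cdot d_{S}(a,g(a))$. The resolution should be to argue that the initial segment of $\gamma$ up to the $g(a)$-adjacent vertex, together with the large $Z$-projection, forces that vertex to actually be adjacent to $g(a)$ on one side and that the first vertex after it continues a geodesic from $g(a)$ to $g^{i}(a)$, so the geodesics concatenate exactly; formally this is cleanest as a simultaneous induction on $i$ proving both (1) and (2).
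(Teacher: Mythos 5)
Your overall strategy (induction, Behrstock's inequality for claim (1), BGIT for claim (2)) is the same as the paper's, but there are two genuine problems in the execution.

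For claim (1), you propose to bootstrap a bound on $d_{Z}(g(a),g^{i}(a))$; but this quantity is not defined, since $g(a)=\partial(Z)$ has empty projection to $Z$, which you yourself acknowledge parenthetically when you ``instead bound $d_{Z}(g(a),g^{2}(a))$ directly,'' a quantity that is also undefined. A related slip: you write $d_{Z}(\partial(g(Z)),\cdot)=d_{Z}(g(a),\cdot)$, but $\partial(g(Z))=g(\partial Z)=g^{2}(a)$, not $g(a)$. The clean version that avoids both issues is the paper's: assume $d_{Z}(g^{2}(a),g^{k+1}(a))>9$; since $g^{2}(a)=\partial(g(Z))$ this reads $d_{Z}(g(Z),g^{k+1}(a))>9$, so Behrstock gives $d_{g(Z)}(Z,g^{k+1}(a))\leq 4$, i.e.\ $d_{g(Z)}(g(a),g^{k+1}(a))\leq 4$ (using $\partial Z=g(a)$), and applying $g^{-1}$ yields $d_{Z}(a,g^{k}(a))\leq 4$. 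This contradicts $d_{Z}(a,g^{k}(a))>M$, which follows from the inductive hypothesis $d_{Z}(g^{2}(a),g^{k}(a))\leq 9$ and the standing hypothesis $d_{Z}(a,g^{2}(a))>M+9$. One never projects $g(a)$ to $Z$; the two subsurfaces $Z$ and $g(Z)$ do all the work.

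For claim (2), you correctly flag the ``$-2$ recovery'' as the main obstacle, but you do not resolve it, and the resolution is exactly the role of the non-separating hypothesis that you invoked only in passing. Since $g(a)$ is non-separating, $Z$ (its complement) is a single connected subsurface, and the complement of $Z$ in $S$ is an annular neighborhood of $g(a)$. Hence the \emph{only} curve with $\pi_{Z}(v)=\emptyset$ is $v=g(a)$ itself. Therefore BGIT applied to a geodesic from $a$ to $g^{i}(a)$ (with $d_{Z}(a,g^{i}(a))>M$, guaranteed by (1) and the hypothesis) forces $g(a)$ to be an actual vertex on that geodesic, not merely a curve at distance $1$ from some vertex. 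This immediately gives $d_{S}(a,g^{i}(a))=d_{S}(a,g(a))+d_{S}(g(a),g^{i}(a))=d_{S}(a,g(a))+d_{S}(a,g^{i-1}(a))$, and the exact equality follows by induction with no error term at all. Your proposal, which tries to ``pin down'' the vertex by additional auxiliary arguments, is not needed once this observation is in place; and without it the coarse inequality $d_{S}(a,g^{i}(a))\geq i\cdot(d_{S}(a,g(a))-2)$ does not self-improve to the claimed equality. (It is precisely because Proposition~\ref{i3} in the general, not-necessarily-non-separating case lacks this rigidity that it only gets the weaker bound $d_{S}(a,g^{i}(a))\geq i\cdot(d_{S}(a,g(a))-4)+4$.)
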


\begin{proof}
We prove by the induction on $i$. 

For the base case, $i=2$, the first statement follows from Lemma \ref{smallprojection}. For the second statement, by using Theorem \ref{BGIT} with the fact that $\partial(Z)=g(a)$ is non--separating, we have
\begin{align}
   d_{S}(a,g^{2}(a))&= d_{S}(a,Z) + d_{S}(Z, g^{2}(a))\tag*{}\\
	  &= 2\cdot d_{S}(a,g(a)). \tag{Since $g(a)=\partial(Z)$}
                  \end{align}

For the inductive step, assume the statement is true for all $i\leq k$: 
we first need to check $\pi_{Z}(g^{k+1}(a))\neq \emptyset$; 
\begin{align}
d_{S}(Z,g^{k+1}(a))&= d_{S}(g(a),g^{k+1}(a)) \tag{Since $g(a)=\partial(Z)$}\\
&= d_{S}(a,g^{k}(a))\tag*{}\\
&= k\cdot d_{S}(a,g(a))\tag{By the inductive hypothesis}\\
&>1.\tag{Since $k\geq 2$ and $d_{S}(a,g(a))>0$}
   \end{align}
Therefore, $\pi_{Z}(g^{k+1}(a))\neq \emptyset$. Now, we prove, by using Theorem \ref{BEH}, that $$d_{Z}(g^{2}(a),g^{k+1}(a))\leq 9.$$ First, we note that since $d_{Z}(g^{2}(a),g^{k}(a))\leq 9$ by the inductive hypothesis, we have $d_{Z}(a,g^{k}(a))\geq d_{Z}(a,g^{2}(a))-d_{Z}(g^{2}(a),g^{k}(a)) >M.$
Now, suppose $d_{Z}(g^{2}(a),g^{k+1}(a))>9$ for a contradiction. Then
\begin{align}
d_{Z}(g^{2}(a),g^{k+1}(a) )>9   &\Longrightarrow d_{Z}(g(Z),g^{k+1}(a) )> 9 \tag{Since $g^{2}(a)=g(\partial(Z))=\partial(g(Z))$} \\
&\Longrightarrow d_{g(Z)}(Z,g^{k+1}(a) )\leq 4 \tag{by Theorem \ref{BEH} as $Z$ and $g(Z)$ overlap} \\
&\Longrightarrow d_{g(Z)}(g(a),g^{k+1}(a) )\leq 4 \tag{Since $g(a)=\partial(Z)$} \\
&\Longrightarrow d_{Z}(a,g^{k}(a) )\leq 4. \tag*{} 
\end{align}
This is a contradiction because $d_{Z}(a,g^{k}(a))>M.$ Therefore, $$d_{Z}(g^{2}(a),g^{k+1}(a))\leq 9.$$ Hence, we have $ d_{Z}(a,g^{k+1}(a)) \geq d_{Z}(a,g^{2}(a))- d_{Z}(g^{2}(a),g^{k+1}(a)) >(M+9)-9 =M,$ and we can apply Theorem \ref{BGIT} here; 
\begin{align}
   d_{S}(a,g^{k+1}(a))&= d_{S}(a,Z)+d_{S}(Z,g^{k+1}(a)) \tag{By Theorem \ref{BGIT}}\\
   &= d_{S}(a,g(a))+d_{S}(g^{}(a),g^{k+1}(a)) \tag{Since $\partial(Z)=g(a)$}\\
      	&=d_{S}(a,g(a))+ d_{S}(a,g^{k}(a)) \tag*{}\\
	&= d_{S}(a,g(a))+ k\cdot d_{S}(a,g(a))  \tag{By inductive hypothesis}\\
                  &=(k+1)\cdot d_{S}(a,g(a)). \tag*{}
                \end{align}

\end{proof}



We use Proposition \ref{sp} in the following way:

\begin{lemma}\label{sp2}
Let $a\in C(S)$ such that it is a non--separating curve and $f\in Mod(S)$ such that $d_{S}(a,f(a))>0$. Let $Y$ denote the complementary component of $a$ in $S$. Let $\phi \in \Phi_{a}$ such that $Y\in \A(\phi).$ There exists $K$ such that, for all $|j|\geq K$, $g= f\circ \phi ^{j}$ satisfy the hypothesis of Proposition \ref{sp}, i.e.
\begin{itemize}
\item $d_{S}(a,g(a))>0.$ 
\item $d_{Z}(a,g^{2}(a))>M+9$ where $Z=g(Y)$, namely $Z$ is the complement of $g(a)$ in $S$.
\end{itemize}
\end{lemma}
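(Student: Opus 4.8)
The plan is to exhibit an explicit lower bound for $K$ and then verify both bullet points directly. Set $g = f \circ \phi^j$. For the first bullet, note that $\phi(a) = a$ since $\phi \in \Phi_a$, so $g(a) = f(\phi^j(a)) = f(a)$ for every $j$; hence $d_S(a, g(a)) = d_S(a, f(a)) > 0$ holds automatically, with no constraint on $j$. The substance is entirely in the second bullet, controlling $d_Z(a, g^2(a))$ where $Z = g(Y) = f(Y)$ (using $g(a) = f(a)$, so indeed $Z$ is the complement of $g(a)$).

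To estimate $d_Z(a, g^2(a))$, first rewrite it in a frame where $\phi$ acts. Since $Z = f(Y)$, we have $d_Z(a, g^2(a)) = d_{f(Y)}(a, g^2(a)) = d_Y(f^{-1}(a), f^{-1}g^2(a))$ by naturality of subsurface projection under the mapping class group action. Now $f^{-1}g^2 = f^{-1}(f\phi^j)(f\phi^j) = \phi^j f \phi^j$, so $f^{-1}g^2(a) = \phi^j f \phi^j(a) = \phi^j f(a) = \phi^j(f(a))$. Thus I need a lower bound on $d_Y(f^{-1}(a), \phi^j(f(a)))$. Apply the triangle inequality in $C(Y)$: $d_Y(f^{-1}(a), \phi^j(f(a))) \geq d_Y(f(a), \phi^j(f(a))) - d_Y(f^{-1}(a), f(a))$. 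The first term is handled by Corollary \ref{gru}: since $Y \in \A(\phi)$ and $\pi_Y(f(a)) \neq \emptyset$ (which needs a quick check — see below), we get $d_Y(f(a), \phi^j(f(a))) \geq k(S)\cdot |j|$. The second term, $d_Y(f^{-1}(a), f(a))$, is a fixed finite number independent of $j$; call it $D$. So for $|j| \geq K := \frac{M + 9 + D}{k(S)}$ we obtain $d_Z(a, g^2(a)) \geq M + 9 + D - D = M+9$ — and to get strict inequality take $K$ slightly larger, or note $d_S(a,f(a)) > 0$ forces $D$ considerations; cleanest is to set $K = \frac{M+10+D}{k(S)}$.

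I should also verify the nonemptiness condition $\pi_Y(f(a)) \neq \emptyset$ required to invoke Corollary \ref{gru}. Here $Y$ is the complement of $a$, so $\pi_Y(x) = \emptyset$ exactly when $x$ lies in $\mathcal{Z}_a$-type position relative to $a$, i.e.\ when $x$ is disjoint from $a$ or equals $a$, equivalently $d_S(a,x) \leq 1$. Since $d_S(a, f(a)) > 0$ — and in the intended application $d_S(a,f(a))$ is large (it is $> 4$ in the main theorem) — we have $d_S(a, f(a)) \geq 1$; to be safe one wants $\geq 2$, which follows from the hypothesis of the theorems that invoke this lemma, or can be folded in. Similarly $\pi_Z(a) = \pi_{f(Y)}(a) \neq \emptyset$ iff $\pi_Y(f^{-1}(a)) \neq \emptyset$ iff $d_S(a, f^{-1}(a)) = d_S(a, f(a)) \geq 2$, which justifies that $d_Z(a, g^2(a))$ is genuinely defined. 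The main obstacle, such as it is, is purely bookkeeping: correctly tracking the conjugation $f^{-1} g^2 = \phi^j f \phi^j$ and the push-forward $d_{f(Y)}(\cdot,\cdot) = d_Y(f^{-1}\cdot, f^{-1}\cdot)$, and confirming the nonemptiness hypotheses so that Corollary \ref{gru} applies; there is no deep difficulty, and the explicit $K$ drops out immediately once these are in place.
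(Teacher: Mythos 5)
Your argument is essentially identical to the paper's: both reduce $d_Z(a,g^{2}(a))$ to $d_Y(f(a),\phi^{j}f(a)) - d_Y(f^{-1}(a),f(a))$ and then invoke Corollary \ref{gru}; you apply naturality of subsurface projection first and the triangle inequality second, whereas the paper does the reverse, but the intermediate quantities are the same. One small correction on your nonemptiness check: since $Y$ is the full complement of $a$, a curve $x$ fails to project to $Y$ only when $x=a$ (a curve merely disjoint from $a$ lies inside $Y$ and does project), so $\pi_{Y}(x)=\emptyset$ iff $d_{S}(a,x)=0$, not $d_{S}(a,x)\leq 1$; consequently the hypothesis $d_{S}(a,f(a))>0$ already guarantees $\pi_{Y}(f(a))\neq\emptyset$, $\pi_{Y}(f^{-1}(a))\neq\emptyset$, and $\pi_{Z}(a)\neq\emptyset$, and no strengthening to $\geq 2$ is needed.
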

\begin{proof}

\begin{itemize}
\item $d_{S}(a,g(a))=d_{S}(a,f\circ \phi^{j}(a))=d_{S}(a,f(a))>0$. 

\item We have 
\begin{align}
   d_{Z}(a,g^{2}(a))&=  d_{Z}(a,( f\circ \phi^{j})^{2}(a))  \tag*{}\\
   &= d_{Z}(a, f\circ \phi^{j} \circ f(a)) \tag{Since $\phi^{j}(a)=a$}\\
   &\geq d_{Z}(f^{2}(a), f\circ \phi^{j}\circ f(a))  - d_{Z}(a,  f^{2}(a))\tag*{}\\
   &=d_{Y}(f(a), \phi^{j} \circ f(a)) - d_{Y}(f^{-1}(a),  f(a))\tag{Since $Z=g(Y)=f(Y)$}\\
  &\geq k(S)\cdot |j|- d_{Y}(f^{-1}(a),  f(a)).\tag{Since $Y \in \A(\phi)$ and Corollary \ref{gru}}
                \end{align}
Hence, take $j$ such that $$k(S)\cdot |j|- d_{Y}(f^{-1}(a),  f(a))>M+9 \Longleftrightarrow |j|>\frac{M+9+d_{Y}(f^{-1}(a),  f(a))}{k(S)}.$$ \end{itemize}

\end{proof}

We give an example of pseudo--Anosov mapping classes which can be produced by Proposition \ref{sp} and Lemma \ref{sp2}:
\begin{example}\label{spmsri}
We use the same notations from Lemma \ref{sp2}. What we need here is $f\in Mod(S)$ such that $d_{S}(a,f(a))>0$ and $d_{Y}(f^{-1}(a),  f(a))$ is small. Let $B\subsetneq S$ such that $B$ overlaps with $Y$. Let $f\in Mod(S)$ such that $$d_{B}(f^{-1}(a),a)>9 \text{ and }  d_{B}(a,f(a))>9.$$ Note that the above $f$ can be found easily by taking a high power of a pseudo--Anosov mapping class supported on $B$. (If $B$ is an annulus then we can take a high power of Dehn twist supported on $B$.) By applying Behrstock's inequality to the above inequalities, we have $$d_{Y}(f^{-1}(a),B)\leq 4 \text{ and }  d_{Y}(B,f(a))\leq 4 \Longrightarrow  d_{Y}(f^{-1}(a),f(a))\leq 8.$$
Recall $g:=f\circ \phi^{j}$. By Lemma \ref{sp2} and Proposition \ref{sp}, for all $j$ such that $|j|>\frac{M+9+8}{k(S)}$, we have 
\begin{align}
   d_{S}(a,g^{i}(a))&=  i\cdot d_{S}(a,g(a))\tag*{}\\
   &\geq i. \tag{Since $d_{S}(a,g(a))=d_{S}(a,f(a))>0$}
                \end{align}
In particular, $||g||>0$. Therefore, $g$ is pseudo--Anosov.
\end{example}




\begin{remark}\label{fix1}
Pseudo--Anosov mapping classes obtained in this section are special from the view point of their actions to $C(S)$. Bowditch showed that if $g\in Mod(S)$ is pseudo--Anosov then there exists $m$ such that $g^{m}$ fixes some geodesic in $C(S)$, i.e. $g$ fixes some quasi--geodesic \cite{BO2}. Furthermore, by the work of Webb \cite{WEB2}, $m$ is known to grow at most doubly exponentially with $\xi(S).$ Pseudo--Anosov mapping classes obtained in this section fix a geodesic in $C(S)$. Furthermore, an invariant geodesic can be found easily: let $g$ be a pseudo--Anosov mapping class obtained here. We have $d_{S}(a,g^{i}(a))=  i\cdot d_{S}(a,g(a)) \text{ for all } i$, in particualr $$||g||=d_{S}(a,g(a)) \Longrightarrow ||g||=d_{S}(a,f(a)).$$ 
Let $[x,y]$ denote a geodesic between $x,y\in C(S)$ and let $$G_{a}:=\cup_{n\in \mathbb{Z}}g^{n}([a,g(a)]).$$ Clearly $g(G_{a})=G_{a}$. Also, $G_{a}$ is a geodesic in $C(S)$ by the above equality.
\end{remark}
 
\subsection{The main theorem}\label{tl2}
We observe Proposition \ref{i3}, which is a general version of Proposition \ref{sp}. The only difference is that we require $d_{S}(a,g(a))>3$ so that  $Z$ and $g(Z)$ overlap, see Proposition \ref{pi3}. This is a technical condition so that we can use Behrstock's inequality.

\begin{proposition}\label{pi3}
Let $a\in C(S)$ and $g\in Mod(S)$ such that $d_{S}(a,g(a))>3.$ Let $Z\subsetneq S$ such that $d_{S}(Z,g(a))\leq 1$. We have 
\begin{enumerate}
\item $Z$ and $g(Z)$ overlap. 
\item $d_{g(Z)}(g(a), Z)\leq \3.$
\item $d_{Z}(g^{2}(a), g(Z))\leq \3.$

\end{enumerate}
\end{proposition}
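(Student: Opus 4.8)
The three assertions of Proposition~\ref{pi3} are geometric facts about the surface $Z$, its image $g(Z)$, and the curves $a$, $g(a)$, $g^2(a)$, and all three will follow by feeding the hypothesis $d_S(a,g(a))>3$ into the subsurface projection machinery already recorded in $\S\ref{3important}$. I would first dispose of (1): since $d_S(Z,g(a))\leq 1$, the curve $g(a)$ is carried, up to distance one, by $\partial(Z)$; applying $g$, we see that $\partial(g(Z))$ lies within distance one of $g^2(a)$, equivalently $d_S(g(Z),g^2(a))\leq 1$. If $Z$ and $g(Z)$ were disjoint or nested, then $\partial(Z)$ and $\partial(g(Z))$ would be at distance at most $2$ in $C(S)$, whence $d_S(g(a),g^2(a))\leq 1+2+1=4$; but $d_S(g(a),g^2(a))=d_S(a,g(a))>3$ only gives $>3$, so I must be a little more careful — the correct bound is that disjoint/nested subsurfaces have boundaries at distance $\le 2$ actually forces $d_S(g(a),g^2(a)) \le 4$, which contradicts nothing. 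So the right argument is: $g(a)$ projects to $Z$ trivially (as $d_S(Z,g(a))\leq 1$ means $g(a)$ is disjoint from or equal to a curve in $Z$... ) — more precisely one shows $\pi_Z(g(a))=\emptyset$ or is within the annular/peripheral regime, hence if $Z,g(Z)$ did not overlap then $g^2(a)$ would also project trivially to $Z$, making $d_S(a,g(a))\le 3$ after unwinding, a contradiction. This is the step I expect to require the most care in bookkeeping.

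\textbf{Establishing the projection bounds.} For (2) and (3) the strategy is uniform: each is an instance of Lemma~\ref{smallprojection} after identifying the right multicurve. For (3), note $g^2(a)$ and $\partial(g(Z))=g(\partial(Z))$ both lie within distance $1$ of the curve $g(a)$ in $C(S)$, so they are represented by curves disjoint from (or equal to) $g(a)$; since $g(a)=\partial(Z)$ up to distance one, both $g^2(a)$ and $\partial(g(Z))$ have representatives disjoint from $\partial(Z)$, hence their projections to $Z$ differ by the projection of a single multicurve (the union $g^2(a)\cup \partial(g(Z))$, which together with $\partial(Z)$ can be realized disjointly), so $d_Z(g^2(a),g(Z))\leq \3$ by Lemma~\ref{smallprojection}. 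For (2), apply $g^{-1}$ to reduce to the symmetric statement: $d_{g(Z)}(g(a),Z)=d_Z(a, g^{-1}(Z))$, and $a$ together with $\partial(g^{-1}(Z))$ — the latter within distance $1$ of $g^{-1}(\text{curve near }g(a)) = $ a curve near $a$ — both sit near $a=\partial(Z)$... wait, here one uses $d_S(Z,g(a))\le 1 \Leftrightarrow d_S(g^{-1}(Z),a)\le 1$, so $\partial(g^{-1}(Z))$ is within distance $1$ of $a$, and $a$ is trivially within distance $0$ of itself, so both are carried near $\partial(g(Z)) = g(\partial Z)$... I would instead argue directly in $g(Z)$: $g(a)$ and $\partial(Z)$ are both within distance $1$ of the curve $\partial(g(Z))$ (using $d_S(Z,g(a))\le 1$ for the first and applying the overlap/closeness for the second), so again their projections to $g(Z)$ are controlled by Lemma~\ref{smallprojection}.

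\textbf{Main obstacle.} The genuine difficulty is part (1), the overlap claim. The hypothesis $d_S(a,g(a))>3$ is being used precisely to guarantee that $Z$ and $g(Z)$ are neither disjoint nor nested, which is the standing assumption needed to apply Behrstock's inequality (Theorem~\ref{BEH}) in the subsequent lemmas; getting the constant exactly right — why $>3$ rather than $>2$ or $>4$ — hinges on the fact that if $Z \supseteq g(Z)$ or $Z \cap g(Z)=\emptyset$ then any curve in $C(S)$ disjoint from $\partial(Z)$ is within bounded distance of curves disjoint from $\partial(g(Z))$, and chasing the exact diameter bound (using that $g(a)$ is distance $\le 1$ from $\partial Z$ and $g^2(a)$ is distance $\le 1$ from $\partial(g(Z))$, with $\partial Z$ and $\partial(g(Z))$ then within distance $2$ of each other under non-overlap) yields $d_S(g(a),g^2(a)) \le 4$, hence $d_S(a,g(a)) \le 4$; since actually the nested/disjoint case forces $d_S(\partial Z, \partial(g(Z))) \le 2$ one gets the sharper $d_S(a,g(a)) \le 1 + 2 + 1 = 4$, and one must argue the intermediate curves can be taken so that the true bound is $\le 3$, contradicting $d_S(a,g(a))>3$. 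Pinning down this edge case carefully is the part I would write out most slowly; everything else is a routine application of Lemma~\ref{smallprojection}.
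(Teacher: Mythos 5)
Your overall plan matches the paper's: prove (1) by a reverse triangle inequality in $C(S)$, and prove (2) and (3) by realizing the relevant curves as a single multicurve and invoking Lemma~\ref{smallprojection}. But the execution has concrete errors in each part. For (1), the bound you quote for non-overlapping subsurfaces is wrong: if $Z$ and $g(Z)$ are disjoint or nested, their boundaries can be realized simultaneously disjointly, so $d_{S}(\partial Z,\partial(g(Z)))\le 1$, not $\le 2$. With the correct bound the contradiction is immediate, and the constant $>3$ in the hypothesis is exactly sharp: non-overlap would give $d_{S}(g(a),g^{2}(a))\le d_{S}(g(a),\partial Z)+d_{S}(\partial Z,\partial(g(Z)))+d_{S}(\partial(g(Z)),g^{2}(a))\le 1+1+1=3$, whereas $d_{S}(g(a),g^{2}(a))=d_{S}(a,g(a))>3$. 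You notice your $\le 2$ bound fails to give a contradiction and then gesture vaguely at "arguing the true bound is $\le 3$"; the fix is precisely the correction above, and you never state it. (The paper phrases the same computation contrapositively: $d_{S}(Z,g(Z))\ge d_{S}(g(a),g^{2}(a))-d_{S}(Z,g(a))-d_{S}(g(Z),g^{2}(a))>3-2=1$, hence overlap.)

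For (3), the assertion "$g^{2}(a)$ and $\partial(g(Z))$ both lie within distance $1$ of $g(a)$" is false: $d_{S}(g^{2}(a),g(a))=d_{S}(g(a),a)>3$, and then $d_{S}(\partial(g(Z)),g(a))\ge d_{S}(g^{2}(a),g(a))-d_{S}(\partial(g(Z)),g^{2}(a))>2$. The parenthetical claim that $g^{2}(a)\cup\partial(g(Z))$ together with $\partial Z$ can be realized disjointly is also false ($g^{2}(a)$ must cross $\partial Z$, since it is far from $g(a)$ which is close to $\partial Z$), but nothing of the sort is needed: applying $g$ to the hypothesis $d_{S}(\partial Z,g(a))\le 1$ gives $d_{S}(\partial(g(Z)),g^{2}(a))\le 1$ directly, so $g^{2}(a)\cup\partial(g(Z))$ is already a multicurve, and Lemma~\ref{smallprojection} yields $d_{Z}(g^{2}(a),g(Z))\le 2$ once one verifies $\pi_{Z}(g^{2}(a))\ne\emptyset$ (which the paper does via $d_{S}(Z,g^{2}(a))\ge d_{S}(g(a),g^{2}(a))-1>1$). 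For (2) you make the analogous false claim ("$g(a)$ and $\partial Z$ are both within distance $1$ of $\partial(g(Z))$" — in fact both of those distances exceed $2$); the clean argument is simply that $d_{S}(g(a),\partial Z)\le 1$ by hypothesis, so $g(a)\cup\partial Z$ is a multicurve, and one projects it to $g(Z)$ after checking the projections are nonempty (the first from the reverse triangle, the second from part (1)).
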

\begin{proof}
\begin{enumerate} 
\item $d_{S}(Z, g(Z))\geq d_{S}(g(a),g^{2}(a))- d_{S}(Z, g(a))- d_{S}(g(Z), g^{2}(a)) \geq d_{S}(g(a),g^{2}(a))-2>1$.
Therefore, $Z$ and $g(Z)$ overlap.

\item First, we check that 
\begin{itemize}
\item $\pi_{g(Z)}(g(a)) \neq \emptyset$ since $d_{S}(g(Z), g(a))\geq d_{S}(g^{2}(a), g(a)) -1>1$. 
\item $\pi_{g(Z)}(Z)\neq \emptyset$ by the first statement. 
\end{itemize}
By Lemma \ref{smallprojection}, we have $d_{g(Z)}(g(a), Z)\leq \3$ since $d_{S}(g(a),Z)\leq1$. 

\item An analogous argument to the above applies here. We only check that $\pi_{Z}(g^{2}(a)) \neq \emptyset$ since $d_{S}(Z, g^{2}(a))\geq d_{S}(g(a), g^{2}(a)) -1>1$. 
\end{enumerate}
\end{proof}

The proof of the following proposition is identical to that of Proposition \ref{sp}, but for completeness, we briefly go over.

\begin{proposition}\label{i3}
Let $a\in C(S)$ and $g\in Mod(S)$ such that $d_{S}(a,g(a))>3.$ Let $Z\subsetneq S$ such that $d_{S}(Z,g(a))\leq 1$. If $d_{Z}(a,g^{2}(a))>M+11,$ then, for all $i\geq 2$, we have 
\begin{enumerate}
\item $d_{Z}(g^{2}(a), g^{i}(a))\leq 11$.
\item $d_{S}(a,g^{i}(a))\geq  i\cdot (d_{S}(a,g(a))-4)+4$.
\end{enumerate}
\end{proposition}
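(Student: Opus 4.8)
The plan is to mimic the proof of Proposition \ref{sp} closely, running an induction on $i$, with the constant $11$ in place of $9$ to absorb the extra slack coming from the fact that $\partial(Z)$ need only be \emph{within distance $1$} of $g(a)$ (rather than equal to it). Throughout, Proposition \ref{pi3} supplies the three facts we need: $Z$ and $g(Z)$ overlap, $d_{g(Z)}(g(a),Z)\leq 2$, and $d_{Z}(g^{2}(a),g(Z))\leq 2$.

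For the base case $i=2$: the estimate $d_{Z}(g^{2}(a),g^{2}(a))=0\leq 11$ is trivial. For the distance statement, I would use the Bounded Geodesic Image Theorem (Theorem \ref{BGIT}) on a geodesic $[a,g^{2}(a)]$: since $d_{Z}(a,g^{2}(a))>M+11>M$, some vertex $v$ of that geodesic fails to project to $Z$, so $d_S(v,\partial(Z))\leq 1$; combined with $d_S(\partial(Z),g(a))\leq 1$ and the fact that $a,g(a),g^2(a)$ are spaced $d_S(a,g(a))$ apart (via $g$-equivariance), a triangle-inequality bookkeeping gives $d_S(a,g^2(a))\geq 2\,(d_S(a,g(a))-4)+4$; actually one gets this by writing $d_S(a,g^2(a))\geq d_S(a,v)+d_S(v,g^2(a))$ and estimating each piece using that $v$ is near $\partial(Z)$ which is near $g(a)$, and $d_S(a,g(a)), d_S(g(a),g^2(a))\geq d_S(a,g(a))$. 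The ``$-4$'' per step is the cost of the two ``$\leq 1$'' fuzz factors at each of the two endpoints of the relevant sub-geodesics (one near $g(a)$, one more on the $g$-translated side), and the ``$+4$'' at the end corrects for double-counting; I would verify the constants carefully since this is where the difference from Proposition \ref{sp} lives.

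For the inductive step, assuming the statement for all $i\leq k$: first check $\pi_Z(g^{k+1}(a))\neq\emptyset$ using $d_S(Z,g^{k+1}(a))\geq d_S(g(a),g^{k+1}(a))-1 = d_S(a,g^k(a))-1$, which by the inductive hypothesis is at least $k(d_S(a,g(a))-4)+4-1>1$ (using $d_S(a,g(a))>3$, so $d_S(a,g(a))-4$ could be small — here I need to be a little careful; in fact $d_S(a,g(a))-4\geq 0$ and the $+4$ and $k\geq 2$ keep things positive, but I should double check the edge case and perhaps note it). Next, the Behrstock argument: from the inductive hypothesis $d_Z(g^2(a),g^k(a))\leq 11$ we get $d_Z(a,g^k(a))\geq d_Z(a,g^2(a))-11>M$. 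Suppose for contradiction $d_Z(g^2(a),g^{k+1}(a))>11$. Since $d_Z(g^2(a),g(Z))\leq 2$ by Proposition \ref{pi3}(3), this forces $d_Z(g(Z),g^{k+1}(a))>9$, so by Behrstock's inequality (Theorem \ref{BEH}, applicable since $Z$ and $g(Z)$ overlap) $d_{g(Z)}(Z,g^{k+1}(a))\leq 4$; combined with $d_{g(Z)}(g(a),Z)\leq 2$ (Proposition \ref{pi3}(2)) this gives $d_{g(Z)}(g(a),g^{k+1}(a))\leq 6$, and applying $g^{-1}$ yields $d_Z(a,g^k(a))\leq 6<M$, a contradiction. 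Hence $d_Z(g^2(a),g^{k+1}(a))\leq 11$, and therefore $d_Z(a,g^{k+1}(a))\geq (M+11)-11=M$. Finally apply Theorem \ref{BGIT} again to $[a,g^{k+1}(a)]$: some vertex is within $1$ of $\partial(Z)$, hence within $2$ of $g(a)$, and splitting the geodesic there together with the inductive hypothesis for $i=k$ and the identity $d_S(g(a),g^{k+1}(a))=d_S(a,g^k(a))$ gives $d_S(a,g^{k+1}(a))\geq (d_S(a,g(a))-4) + (k(d_S(a,g(a))-4)+4)$, i.e. the claimed bound, modulo once more tracking the fuzz constants.

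The main obstacle I anticipate is bookkeeping the additive constants in part (2): unlike Proposition \ref{sp}, where $\partial(Z)=g(a)$ exactly and the distances add cleanly, here each application of BGIT only locates a geodesic vertex \emph{near} $\partial(Z)$, and $\partial(Z)$ is only near $g(a)$, so each ``concatenation'' loses a bounded amount, and I must be sure that the per-step loss is exactly $4$ (so that $d_S(a,g(a))-4$ is the right ``effective step length'') and that the global $+4$ correction is right and that the inequality does not degrade across the induction. A secondary, minor point is confirming positivity of the various projection sets (that $\pi_Z$ of the relevant curves is nonempty) in the regime where $d_S(a,g(a))$ is as small as $4$; this should follow from the $+4$ term and $k\geq 2$, but deserves an explicit line.
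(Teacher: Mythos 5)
Your proof is correct and follows the paper's own argument step for step: the same induction, the same use of Proposition~\ref{pi3} to bound $d_{g(Z)}(g(a),Z)$ and $d_Z(g^2(a),g(Z))$ by $2$, the same three-step Behrstock chain producing $d_{g(Z)}(g(a),g^{k+1}(a))\le 6$, and the same per-step loss of $4$ (two $\le 1$ fuzz factors at $\partial(Z)$ combined with one more $\le 1$ to the BGIT vertex on each side of the split) in the distance estimate. The only trivial slip is writing $d_Z(a,g^{k+1}(a))\geq M$ near the end where it should be $>M$ (needed both for BGIT and to contradict $\le 6$ given $M>5$), but you already have the strict version one line earlier, so this is purely cosmetic.
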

\begin{proof}

We prove by the induction on $i$.

For the base case, $i=2$, the first statement follows from Lemma \ref{smallprojection}. For the second statement, we observe 
\begin{align}
   d_{S}(a,g^{2}(a))
   	&\geq (d_{S}(a,g(a))-2) + (d_{S}(g(a), g^{2}(a))-2) \tag{By Theorem \ref{BGIT} and $d_{S}(Z,g(a))\leq 1$}\\
	&=2\cdot (d_{S}(a,g(a))-4)+4.  \tag*{}
                  \end{align}

For the inductive step, assume the statement is true for all $i\leq k$: 
we first need to check $\pi_{Z}(g^{k+1}(a))\neq \emptyset$; 
\begin{align}
d_{S}(Z,g^{k+1}(a))
&\geq d_{S}(g(a),g^{k+1}(a))-  1\tag{Since $d_{S}(g(a),Z)\leq 1$}\\
&\geq k\cdot (d_{S}(a,g(a))-4)+4 -1\tag{By inductive hypothesis}\\
&>1.\tag{Since $d_{S}(a,g(a))>3$}
   \end{align}
Therefore, $\pi_{Z}(g^{k+1}(a))\neq \emptyset$. Now, we prove $$d_{Z}(g^{2}(a),g^{k+1}(a))\leq11.$$
First, since $d_{Z}(g^{2}(a),g^{k}(a))\leq11$ by the inductive hypothesis, we have $d_{Z}(a,g^{k}(a))>M.$
Now, suppose $d_{Z}(g^{2}(a),g^{k+1}(a))>11$ for a contradiction. Then 
\begin{align}
 d_{Z}(g^{2}(a),g^{k+1}(a) )> 11  &\Longrightarrow d_{Z}(g(Z),g^{k+1}(a) )> 9 \tag{by Proposition \ref{pi3}} \\
   &\Longrightarrow d_{g(Z)}(Z,g^{k+1}(a) )\leq 4 \tag{by Theorem \ref{BEH} and Proposition \ref{pi3}} \\
      &\Longrightarrow d_{g(Z)}(g(a),g^{k+1}(a) )\leq 6 \tag{by Proposition \ref{pi3}} \\
      &\Longrightarrow d_{Z}(a,g^{k}(a) )\leq 6.& \tag*{} 
\end{align}
This is a contradiction because $d_{Z}(a,g^{k}(a))>M.$ See Remark \ref{rBGIT}. Therefore, $$d_{Z}(g^{2}(a),g^{k+1}(a))\leq 11.$$ Hence, we have $d_{Z}(a,g^{k+1}(a))\geq d_{Z}(a,g^{2}(a))- d_{Z}(g^{2}(a),g^{k+1}(a))>M.$
By Theorem \ref{BGIT}, we have
\begin{align}
   d_{S}(a,g^{k+1}(a))
   &\geq d_{S}(a,g(a))+d_{S}(g^{}(a),g^{k+1}(a))-4 \tag*{}\\
	&\geq d_{S}(a,g(a))+ k\cdot (d_{S}(a,g(a))-4)+4-4 \tag*{}\\
                  &=(k+1)\cdot (d_{S}(a,g(a))-4)+4. \tag*{}
                \end{align}
\end{proof}

      
\begin{lemma}\label{actual}
Let $a\in C(S)$ and $f\in Mod(S)$ such that $d_{S}(a,f(a))>3$. Let $\phi \in \Phi_{a}$. There exists $K$ such that, for all $|j|\geq K$, $g= f\circ \phi ^{j}$ satisfy the hypothesis of Proposition \ref{i3}, i.e.
\begin{itemize}
\item $d_{S}(a,g(a))>3.$ 
\item $d_{Z}(a,g^{2}(a))>M+11$ where $Z=g(Y)$ where $Y\in  \A(\phi) $. Note that $d_{S}(Z,g(a))=d_{S}(Y,a)\leq 1$ since $\A(\phi)\subseteq \mathcal{Z}_{a}$ as $\phi \in \Phi_{a}$.
\end{itemize}

\end{lemma}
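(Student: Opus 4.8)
The plan is to mimic the computation already carried out in Lemma \ref{sp2}, now keeping careful track of the weaker hypothesis $d_S(a,f(a))>3$ (rather than $>0$) and the fact that the target projection distance must now exceed $M+11$ instead of $M+9$. First I would verify the first bullet: since $\phi\in\Phi_a$ we have $\phi^j(a)=a$ for all $j$, hence $g(a)=f\circ\phi^j(a)=f(a)$, so $d_S(a,g(a))=d_S(a,f(a))>3$ for every $j$; this requires no lower bound on $|j|$ and also gives the parenthetical remark that $d_S(Z,g(a))=d_S(f(Y),f(a))=d_S(Y,a)\le 1$, using that $Y\in\A(\phi)\subseteq\mathcal Z_a$ so $\pi_Y(a)=\emptyset$, which by definition of $\mathcal Z_a$ forces $Y$ to be the annulus with core $a$ or a subsurface disjoint from $a$, and in either case $\partial Y$ can be realized disjoint from $a$.

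Next I would estimate $d_Z(a,g^2(a))$ from below exactly as in Lemma \ref{sp2}. Writing $g^2(a)=f\circ\phi^j\circ f\circ\phi^j(a)=f\circ\phi^j\circ f(a)$ since $\phi^j(a)=a$, and using $Z=g(Y)=f(Y)$, I would apply the triangle inequality in $C(Z)$:
\begin{align}
d_Z(a,g^2(a)) &\geq d_Z(f^2(a),\,f\circ\phi^j\circ f(a)) - d_Z(a,f^2(a))\tag*{}\\
&= d_Y(f(a),\,\phi^j\circ f(a)) - d_Y(f^{-1}(a),f(a))\tag*{}\\
&\geq k(S)\cdot|j| - d_Y(f^{-1}(a),f(a)),\tag*{}
\end{align}
where the middle equality is the $Mod(S)$-equivariance of subsurface projection applied with $f^{-1}$ (pulling $Z=f(Y)$ back to $Y$), and the last inequality is Corollary \ref{gru} applied to $Y\in\A(\phi)$, which requires $\pi_Y(f(a))\neq\emptyset$; this holds because $d_S(a,f(a))>3$ forces $f^{-1}(a)$ and $a$ to fill, so in particular $f(a)$ cannot be disjoint from $\partial Y$ (alternatively one notes $\pi_Y(f(a))=\emptyset$ together with $\pi_Y(a)=\emptyset$ would bound $d_S(a,f(a))$).

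Finally I would solve for $j$: it suffices to take
\[
k(S)\cdot|j| - d_Y(f^{-1}(a),f(a)) > M+11,
\quad\text{i.e.}\quad
|j| > \frac{M+11+d_Y(f^{-1}(a),f(a))}{k(S)},
\]
and set $K$ to be this threshold (rounded up to an integer), so that for all $|j|\geq K$ both bullets of Proposition \ref{i3} hold with the stated $Z$. I do not expect a serious obstacle here; the only subtlety worth a sentence is the non-emptiness of the projections ($\pi_Y(f(a))\neq\emptyset$ and $\pi_Z(a),\pi_Z(f^2(a))\neq\emptyset$) needed to invoke Corollary \ref{gru} and Lemma \ref{smallprojection}, all of which follow from $d_S(a,f(a))>3$ together with $\pi_Y(a)=\emptyset$. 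One should also note that $d_Y(f^{-1}(a),f(a))$ is a finite number depending only on $f$, $a$, and the choice of $Y\in\A(\phi)$, so $K$ is well-defined.
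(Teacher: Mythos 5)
Your proposal is correct and follows essentially the same route as the paper: observe $g(a)=f(a)$ since $\phi^j(a)=a$; pull the projection to $Z=g(Y)=f(Y)$ back to $Y$ by equivariance; lower-bound $d_Y(f(a),\phi^j\circ f(a))$ by $k(S)\cdot|j|$ via Corollary~\ref{gru}; subtract the fixed constant $d_Y(f^{-1}(a),f(a))$; and solve for $j$. The one small slip is your first justification for $\pi_Y(f(a))\neq\emptyset$ — that ``$f^{-1}(a)$ and $a$ fill'' does not by itself say anything about $f(a)$ and $\partial Y$ — but your parenthetical alternative is the right argument: $\pi_Y(a)=\emptyset$ means $\partial Y$ is disjoint from $a$, so if also $\pi_Y(f(a))=\emptyset$ then $d_S(a,f(a))\leq 2$, contradicting $d_S(a,f(a))>3$. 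You are actually a bit more careful than the paper, which does not spell out the non-vacuousness of the relevant projections.
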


\begin{proof}
\begin{itemize}
\item $d_{S}(a,g(a))=d_{S}(a,f\circ \phi^{j}(a))=d_{S}(a,f(a))>3$. 

\item First we note $Z=g(Y)=f\circ \phi^{j}(Y)=f(Y)$. We have 
\begin{align}
   d_{Z}(a,g^{2}(a))
   &= d_{Z}(a, f\circ \phi^{j} \circ f(a)) \tag{Since $\phi^{j}(a)=a$}\\
   &\geq d_{Z}(f^{2}(a), f\circ \phi^{j}\circ f(a))  - d_{Z}(a,  f^{2}(a))\tag*{}\\
   &=d_{Y}(f(a), \phi^{j} \circ f(a)) - d_{Y}(f^{-1}(a),  f(a))\tag{Since $Z=f(Y)$}\\
  &\geq k(S)\cdot |j|- d_{Y}(f^{-1}(a),  f(a)).\tag{By $Y \in \A(\phi)$ and Corollary \ref{gru}}
                \end{align}
Hence, if $j$ is such that $$k(S)\cdot |j|- d_{Y}(f^{-1}(a),  f(a))>M+11 \Longleftrightarrow |j|>\frac{M+11+d_{Y}(f^{-1}(a),  f(a))}{k(S)},$$ then $g=f\circ \phi^{j}$ satisfies the hypothesis of Proposition \ref{i3}.
\end{itemize}
\end{proof}

We have the main theorem of this paper. The proof follows by combining Proposition \ref{i3} and Lemma \ref{actual} with Theorem \ref{BBF}.

\begin{theorem}\label{whatweprove}
Let $f\in Mod(S)$ and $a \in C(S)$ such that $d_{S}(a,f(a))>4$. There exists $\mathcal{W} \subseteq \mathcal{Z}_{a}$ with $|\mathcal{W}|<\infty$ such that the following holds: if $\phi \in \Phi_{a}$ such that $\A(\phi) \cap \{\mathcal{Z}_{a} \setminus \mathcal{W} \}\neq \emptyset$, then $f\circ \phi^{j}$ is pseudo--Anosov whenever $|j|> \frac{M+14}{k(S)}$.
\end{theorem}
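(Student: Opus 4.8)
The plan is to combine the orbit-control statement of Proposition \ref{i3} with Lemma \ref{actual}, and then to use Theorem \ref{BBF} to control the exceptional set. First I would define $\mathcal{W}$ explicitly, following Remark \ref{aboutthemt}: set $\mathcal{W}:=\{W\in \mathcal{Z}_{a}\mid d_{W}(f^{-1}(a),f(a))>3\}$. By Theorem \ref{BBF} applied to the pair $f^{-1}(a),f(a)\in C(S)$, there are only finitely many subsurfaces with $d_{W}(f^{-1}(a),f(a))>3$; intersecting with $\mathcal{Z}_a$ only shrinks this set, so $|\mathcal{W}|<\infty$. This is the set that will be discarded.

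Now suppose $\phi\in\Phi_a$ with $\A(\phi)\cap\{\mathcal{Z}_a\setminus\mathcal{W}\}\neq\emptyset$, and pick $Y\in\A(\phi)\cap\{\mathcal{Z}_a\setminus\mathcal{W}\}$; so $Y\in\A(\phi)$, $Y\in\mathcal{Z}_a$, and $d_Y(f^{-1}(a),f(a))\leq 3$. I would then re-run the computation of Lemma \ref{actual} with this specific $Y$: writing $g=f\circ\phi^j$ and $Z=g(Y)=f(Y)$, the same chain of inequalities gives
\begin{align*}
d_Z(a,g^2(a))&\geq k(S)\cdot|j|-d_Y(f^{-1}(a),f(a))\\
&\geq k(S)\cdot|j|-3.
\end{align*}
Since $d_S(a,f(a))>4>3$ we have $d_S(a,g(a))=d_S(a,f(a))>3$, and $d_S(Z,g(a))=d_S(Y,a)\leq 1$ because $\A(\phi)\subseteq\mathcal{Z}_a$. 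So whenever $k(S)\cdot|j|-3>M+11$, i.e. $|j|>\frac{M+14}{k(S)}$, the hypotheses of Proposition \ref{i3} are satisfied. Applying its second conclusion with $i$ arbitrary, $d_S(a,g^i(a))\geq i\cdot(d_S(a,g(a))-4)+4\geq i\cdot(d_S(a,f(a))-4)+4$, and since $d_S(a,f(a))-4>0$ this forces $d_S(a,g^i(a))\to\infty$ linearly, hence $\|g\|\geq d_S(a,f(a))-4>0$. By the criterion that positive stable length implies pseudo--Anosov, $g=f\circ\phi^j$ is pseudo--Anosov for all such $j$.

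The only genuinely delicate point is making sure Proposition \ref{i3} is applicable, namely that $Y$ (equivalently $Z$) is a \emph{proper} subsurface $Z\subsetneq S$ and that all the projections invoked there are nonempty; the properness is automatic since $Y\in\mathcal{Z}_a$ means $Y$ lies in the complement of $a$ or is the annulus around $a$, so $Y\neq S$, and the nonemptiness of the relevant projections is handled inside the proof of Proposition \ref{i3} using $d_S(a,g(a))>3$. A secondary subtlety, already flagged in Remark \ref{aboutthemt}, is that the bound $|j|>\frac{M+14}{k(S)}$ must be uniform over all admissible $\phi$ — this works precisely because after discarding $\mathcal{W}$ we always get the uniform bound $d_Y(f^{-1}(a),f(a))\leq 3$, so the threshold in Lemma \ref{actual} no longer depends on $\phi$. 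I expect writing this up to be essentially bookkeeping once the definition of $\mathcal{W}$ and the substitution $d_Y(f^{-1}(a),f(a))\leq 3$ are in place.
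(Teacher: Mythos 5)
Your proposal is correct and follows the paper's own proof essentially verbatim: define $\mathcal{W}=\{W\in\mathcal{Z}_a\mid d_W(f^{-1}(a),f(a))>3\}$, use Theorem \ref{BBF} for finiteness, feed $d_Y(f^{-1}(a),f(a))\le 3$ into Lemma \ref{actual} to get the threshold $\frac{M+11+3}{k(S)}=\frac{M+14}{k(S)}$, and then apply Proposition \ref{i3} to force linear orbit growth and hence $\|g\|>0$. The extra checks you flag (properness of $Y$, uniformity of the bound in $\phi$) are correct and are the implicit reasons the paper's one-line citation of Lemma \ref{actual} with the constant $M+14$ works.
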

\begin{proof}
Let $\mathcal{W}:=\{W\subseteq \mathcal{Z}_{a}| d_{W}(f^{-1}(a),f(a))>3\}.$ By Theorem \ref{BBF}, we have $|\mathcal{W}|<\infty.$ Let $g=f\circ \phi^{j}.$ Since $\phi \in \Phi_{a}$ such that $\A(\phi) \cap \{\mathcal{Z}_{a} \setminus \mathcal{W} \}\neq \emptyset$, by Lemma \ref{actual} and Proposition \ref{i3}, for all $j$ such that $|j|>\frac{M+11+3}{k(S)}$, we have 
\begin{align}
   d_{S}(a,g^{i}(a))&\geq  i\cdot (d_{S}(a,g(a))-4)+4 \tag*{}\\
   &\geq i+4. \tag{Since $d_{S}(a,g(a))=d_{S}(a,f(a))>4$}
                \end{align}
In particular, $||g||>0$. Therefore, $g=f\circ \phi^{j}$ is pseudo--Anosov.
\end{proof}

\begin{remark}
We note that Theorem \ref{whatweprove} holds for an opposite family, $\{\phi^{j} \circ f | j\in \mathbb{Z}\}$: let $\langle h \rangle\cdot a$ denote the orbit of $a$ under the iteration of $h\in Mod(S)$ in $C(S)$. We observe that $$\langle \phi^{j}\circ f \rangle \cdot a=\phi^{j}( \langle  f\circ \phi^{j}  \rangle\cdot a).$$
\end{remark}

By the arguments given in this section, we have

\begin{corollary}\label{cccc1}
Let $f\in Mod(S)$ and $a \in C(S)$ such that $d_{S}(a,f(a))>4$. If $\phi \in \Phi_{a}$ such that $i(f^{}(a), \partial(Y)) >2\cdot i(f^{-1}(a), f(a))$ or $i(f^{-1}(a), \partial(Y))>2\cdot i(f^{-1}(a), f(a))$ for some $Y\in \A(\phi)$, then $\phi^{j}\circ  f$ is pseudo--Anosov whenever $|j|> \frac{M+14}{k(S)}$.
\end{corollary}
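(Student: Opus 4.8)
The plan is to deduce Corollary \ref{cccc1} from Theorem \ref{whatweprove} by showing that the intersection-number hypothesis forces $Y$ to lie outside the finite exceptional set $\mathcal{W}$. Recall from the proof of Theorem \ref{whatweprove} that $\mathcal{W}=\{W\subseteq\mathcal{Z}_a\mid d_W(f^{-1}(a),f(a))>3\}$, and from Remark \ref{aboutthemt} (quoting the proof of Theorem \ref{BBF} in \cite{BBF}) that every $W\in\mathcal{W}$ satisfies $i(f(a),\partial(W))\leq 2\cdot i(f^{-1}(a),f(a))$ and $i(f^{-1}(a),\partial(W))\leq 2\cdot i(f^{-1}(a),f(a))$. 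Thus, contrapositively, if a subsurface $Y\in\mathcal{Z}_a$ has $i(f(a),\partial(Y))>2\cdot i(f^{-1}(a),f(a))$ or $i(f^{-1}(a),\partial(Y))>2\cdot i(f^{-1}(a),f(a))$, then $Y\notin\mathcal{W}$.

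The steps I would carry out, in order, are the following. First, note that $\phi\in\Phi_a$ implies $\A(\phi)\subseteq\mathcal{Z}_a$, so $Y\in\mathcal{Z}_a$ and it makes sense to ask whether $Y\in\mathcal{W}$. Second, invoke the above Bestvina--Bromberg--Fujiwara bound on elements of $\mathcal{W}$ to conclude that the hypothesized $Y$ satisfies $Y\notin\mathcal{W}$, hence $Y\in\mathcal{Z}_a\setminus\mathcal{W}$. Third, since $Y\in\A(\phi)$ as well, we get $\A(\phi)\cap\{\mathcal{Z}_a\setminus\mathcal{W}\}\neq\emptyset$, so the hypothesis of Theorem \ref{whatweprove} is met. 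Fourth, apply Theorem \ref{whatweprove} to conclude $f\circ\phi^j$ is pseudo--Anosov whenever $|j|>\frac{M+14}{k(S)}$. Fifth, pass from $f\circ\phi^j$ to $\phi^j\circ f$ using the remark following Theorem \ref{whatweprove}: conjugation by $\phi^j$ sends $f\circ\phi^j$ to $\phi^j\circ f$, and pseudo--Anosov is a conjugacy-invariant property, so $\phi^j\circ f$ is pseudo--Anosov for the same range of $j$.

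The one genuinely substantive point — and the only place the argument is not pure bookkeeping — is the claim that every $W\in\mathcal{W}$ obeys the two intersection-number inequalities. This is asserted in Remark \ref{aboutthemt} as a consequence of the proof (not merely the statement) of Theorem \ref{BBF}, i.e.\ of the combinatorial argument in \cite{BBF} that produces the finite set of large-projection domains together with quantitative control on their boundaries. So the main obstacle, such as it is, is simply to state this quoted fact carefully and cite it; no new estimate is needed beyond what is already recorded in Remark \ref{aboutthemt}. Everything else is a direct chaining of Theorem \ref{whatweprove}, the description of $\mathcal{W}$, and conjugation-invariance of the pseudo--Anosov property, so the proof will be short.

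Finally, I would remark (as the paper does, via Remark \ref{dry1}) that such a $\phi$ exists in abundance: for any fixed $f$ and $a$ with $d_S(a,f(a))>4$, one can choose a subsurface $Y\in\mathcal{Z}_a$ whose boundary has large intersection number with $f(a)$ or $f^{-1}(a)$ — for instance by taking $Y$ to be (a regular neighborhood of) a curve disjoint from $a$ that has been twisted heavily about some curve meeting $f(\pm 1)(a)$ — and then letting $\phi$ be a high power of a partial pseudo--Anosov, a Dehn twist, or a multitwist supported so that $Y\in\A(\phi)$ and $\phi(a)=a$. This shows the corollary is not vacuous.
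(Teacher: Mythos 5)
Your proof is correct and follows essentially the same route as the paper's: both deduce the corollary from Theorem \ref{whatweprove} by using the Bestvina--Bromberg--Fujiwara intersection bound to show $Y\notin\mathcal{W}$, and both pass from $f\circ\phi^j$ to $\phi^j\circ f$ (you via conjugacy-invariance of the pseudo--Anosov property, the paper via the orbit-translation remark following Theorem \ref{whatweprove}, which amounts to the same observation). Your write-up is slightly more explicit about the conjugation step, which the paper leaves implicit, but there is no substantive difference.
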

\begin{proof}
By the proof of Theorem \ref{BBF}, which we refer the reader to \cite{BBF}, if $W\in \mathcal{W}$ then we must have $$i(f^{}(a), \partial(W)) \leq 2\cdot i(f^{-1}(a), f(a)) \text{ and }i(f^{-1}(a), \partial(W))\leq 2\cdot i(f^{-1}(a), f(a)).$$ Therefore, if $$i(f^{}(a), \partial(Y)) >2\cdot i(f^{-1}(a), f(a)) \text{ or }i(f^{-1}(a), \partial(Y))>2\cdot i(f^{-1}(a), f(a))$$ then we must have $d_{Y}(f^{-1}(a),f(a))\leq3$. Hence, $\phi^{j}\circ  f$ is pseudo--Anosov whenever $|j|> \frac{M+14}{k(S)}$. 
\end{proof}

\begin{remark}\label{dry1}
A pure mapping class $\phi$ in Corollary \ref{cccc1} can be found easily. For instance, one can pick $\gamma \in C(S)$ such that $i(a,\gamma)=0$ and $i(f(a),\gamma)>2\cdot i(f^{-1}(a), f(a))$, and take $\phi$ to be Dehn twist along $\gamma$.
\end{remark}

We also have

\begin{corollary}\label{cccc2}
Let $f\in Mod(S)$ and $a \in C(S)$ such that $d_{S}(a,f(a))>4$ and that there exists a geodesic between $f^{-1}(a)$ and $f(a)$ whose $1$--neighborhood does not intersect with $\{\partial(Z)|Z\in \mathcal{Z}_{a}\}$ in $C(S)$. If $\phi \in \Phi_{a}$, then $\phi^{j}\circ  f$ is pseudo--Anosov whenever $|j|> \frac{2\cdot M+11}{k(S)}.$ 
\end{corollary}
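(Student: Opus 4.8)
\textbf{Proof plan for Corollary \ref{cccc2}.}
The plan is to run the same machine that proves Theorem \ref{whatweprove}, but to exploit the extra hypothesis on the geodesic between $f^{-1}(a)$ and $f(a)$ to upgrade the conclusion from ``almost all $\phi \in \Phi_{a}$'' to ``every $\phi \in \Phi_{a}$'', at the cost of a slightly worse constant. The key observation is that the exceptional set $\mathcal{W}=\{W\subseteq \mathcal{Z}_{a}\mid d_{W}(f^{-1}(a),f(a))>3\}$ from the proof of Theorem \ref{whatweprove} becomes empty under the new hypothesis: if some geodesic $[f^{-1}(a),f(a)]$ has its $1$--neighborhood disjoint from $\{\partial(Z)\mid Z\in \mathcal{Z}_{a}\}$ in $C(S)$, then for every $Z\in \mathcal{Z}_{a}$ every vertex of that geodesic projects nontrivially to $Z$ (a vertex $v$ fails to project to $Z$ only if $d_{S}(v,\partial(Z))\le 1$), so Theorem \ref{BGIT} gives $d_{Z}(f^{-1}(a),f(a))\le M$. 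Since $Y\in\A(\phi)\subseteq\mathcal{Z}_{a}$ for any $\phi\in\Phi_{a}$, we get $d_{Y}(f^{-1}(a),f(a))\le M$ for every relevant $Y$.

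With that bound in hand, I would revisit Lemma \ref{actual}: the computation there shows $d_{Z}(a,g^{2}(a))\ge k(S)\cdot|j|-d_{Y}(f^{-1}(a),f(a))\ge k(S)\cdot|j|-M$ for $g=f\circ\phi^{j}$ and $Z=f(Y)$. Requiring this to exceed $M+11$ forces $|j|>\frac{2M+11}{k(S)}$, which is exactly the constant in the statement. So for $|j|>\frac{2M+11}{k(S)}$ the map $g=f\circ\phi^{j}$ satisfies the hypotheses of Proposition \ref{i3} (note $d_{S}(a,g(a))=d_{S}(a,f(a))>4>3$ and $d_{S}(Z,g(a))=d_{S}(Y,a)\le 1$), hence $d_{S}(a,g^{i}(a))\ge i\cdot(d_{S}(a,g(a))-4)+4\ge i+4$ for all $i\ge 2$, so $\|g\|\ge 1>0$ and $g$ is pseudo--Anosov. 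Finally, as in the remark following Theorem \ref{whatweprove}, passing from $\{f\circ\phi^{j}\}$ to $\{\phi^{j}\circ f\}$ is harmless: $\langle \phi^{j}\circ f\rangle\cdot a=\phi^{j}(\langle f\circ\phi^{j}\rangle\cdot a)$, so $\phi^{j}\circ f$ is conjugate to $f\circ\phi^{j}$ and is pseudo--Anosov for the same range of $j$.

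The only genuinely new point, and the step I would be most careful about, is the claim that the geodesic hypothesis yields $d_{Z}(f^{-1}(a),f(a))\le M$ for all $Z\in\mathcal{Z}_{a}$ simultaneously — i.e. correctly invoking the contrapositive form of the Bounded Geodesic Image Theorem. One must check that ``the $1$--neighborhood of the geodesic misses $\{\partial(Z)\mid Z\in\mathcal{Z}_{a}\}$'' really does guarantee every vertex projects nontrivially to every such $Z$, including the annular case where $Z$ is the annulus with core $a$ (here $\partial(Z)=a$, and since $d_S(a,f(a))>4$ the endpoint $f(a)$ certainly projects, but one should confirm all interior vertices do too, which is immediate from the neighborhood condition). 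Everything else is a verbatim replay of the arguments already carried out for Theorem \ref{whatweprove}, so no further obstacles are expected.
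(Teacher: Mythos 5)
Your proposal is correct and follows essentially the same approach as the paper: both use the geodesic hypothesis together with the contrapositive of Theorem \ref{BGIT} to bound $d_{Z}(f^{-1}(a),f(a))\leq M$ uniformly over $Z\in\mathcal{Z}_{a}$, then feed that bound into the computation of Lemma \ref{actual} and apply Proposition \ref{i3}. The paper's proof is terser but identical in substance; your extra care about why the $1$--neighborhood condition guarantees nontrivial projection of every vertex, and the conjugacy remark for passing from $f\circ\phi^{j}$ to $\phi^{j}\circ f$, are exactly the points the paper leaves implicit.
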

\begin{proof}
Since there exists a geodesic between $f^{-1}(a)$ and $f(a)$ whose $1$--neighborhood does not intersect with $\{\partial(Z)|Z\in \mathcal{Z}_{a}\}$ in $C(S)$, we have that $d_{Z}(f^{-1}(a),f(a))\leq M$ for all $Z\in \mathcal{Z}_{a}$ by Theorem \ref{BGIT}. Because $\A(\phi)\subseteq \mathcal{Z}_{a}$, $\phi^{j}\circ  f$ is pseudo--Anosov whenever $|j|> \frac{2\cdot M+11}{k(S)}$. 
\end{proof}
 
 \begin{remark}\label{dry2}
A pair $f\in Mod(S)$ and $a \in C(S)$ in Corollary \ref{cccc2} can be found easily. For instance, one can first pick a non--separating curve $\gamma \in C(S)$ such that $d_{S}(a,\gamma)\gg 0$ and take a partial pseudo--Anosov map $g$ supported on the complement of $\gamma$. There exists $p$ such that $d_{S}(g^{-p}(a),g^{p}(a))=2\cdot d_{S}(g^{p}(a),\gamma)$ by Lemma \ref{min} and Theorem \ref{BGIT}, which means $\gamma$ is the midpoint of a geodesic between $g^{-p}(a)$ and $g^{p}(a)$. Take $f=g^{p}$. Recall that the curve graph is hyperbolic (Originally due to Masur--Minsky \cite{MM1}, also worked out by Aougab \cite{AOU}, Bowditch \cite{BO1}\cite{BO4}, Hamenst{\"a}dt \cite{HAM}, Clay--Rafi--Schleimer \cite{CRS}, and Hensel--Przytycki--Webb \cite{HPW}); since $d_{S}(f^{-1}(a),a)= d_{S}(a,f(a))$, $d_{S}(a,\gamma)\gg 0$, and $d_{S}(\partial(Z),a)\leq 1$ for all $Z\in \mathcal{Z}_{a}$, we have the added assumption. 
Finally, we observe that $d_{S}(a,f(a))> 4$, otherwise $d_{S}(a,\gamma)$ would be small.
\end{remark}

Lastly, we remark on the stable lengths of pseudo--Anosov mapping class obtained in this section:

\begin{remark}\label{fix2}
As in Remark \ref{fix1}, if $g$ is a pseudo--Anosov mapping class obtained in this section, then we have $$d_{S}(a,g(a))-4 \leq ||g||\leq d_{S}(a,g(a))\Longrightarrow d_{S}(a,f(a))-4 \leq ||g||\leq d_{S}(a,f(a)).$$ 
\end{remark}

\bibliographystyle{plain}
\bibliography{references.bib}

\end{document}